\tikzset{->-/.style={decoration={
  markings,
  mark=at position 0.5 with {\arrow{stealth}}},postaction={decorate}}}
\tikzset{->>-/.style={decoration={
  markings,
  mark=at position 0.5 with {\arrow{>>}}},postaction={decorate}}}
\tikzset{snake it/.style={decorate, decoration=snake}}
\DeclareFontFamily{U}{rsfs}{\skewchar\font127 }
\DeclareFontShape{U}{rsfs}{m}{n}{%
   <-6> rsfs5
   <6-8> rsfs7
   <8-> rsfs10
}{}
\definecolor{dark-red}{rgb}{0.4,0.15,0.15}
\definecolor{dark-blue}{rgb}{0.15,0.15,0.4}
\definecolor{medium-blue}{rgb}{0,0,0.5}
\LetLtxMacro{\amsmathdots}{\dots}
\DeclareMathOperator{\tr}{Tr}
\DeclareMathOperator{\Stab}{Stab}
\DeclareMathOperator{\GL}{GL}
\DeclareMathOperator{\PGL}{PGL}
\DeclareMathOperator{\Mat}{Mat}
\DeclareMathOperator{\PConf}{PConf}
\DeclareMathOperator{\UConf}{UConf}
\DeclareMathOperator{\Frob}{Frob}
\newcommand*{\et}{\text{ét}}
\newcommand*{\tolabel}[1]{\xrightarrow{#1}}
\newcommand*{\dispunct}[1]{\,\text{#1}}
\newcommand{\from}{\vcentcolon}
\newcommand{\dsum}{\oplus}
\newcommand{\dSum}{\bigoplus}
\newcommand{\tensor}{\otimes}
\NewDocumentCommand\xDeclarePairedDelimiter{mmm}
 {%
  \NewDocumentCommand#1{som}{%
   \IfNoValueTF{##2}
    {\IfBooleanTF{##1}{#2##3#3}{\mleft#2##3\mright#3}}
    {\mathopen{##2#2}##3\mathclose{##2#3}}%
  }%
 }
\xDeclarePairedDelimiter{\abs}{\lvert}{\rvert}
\xDeclarePairedDelimiter{\norm}{\lVert}{\rVert}
\xDeclarePairedDelimiter{\floor}{\lfloor}{\rfloor}
\xDeclarePairedDelimiter{\ceil}{\lceil}{\rceil}
\xDeclarePairedDelimiter{\gen}{\langle}{\rangle}
\xDeclarePairedDelimiter{\pseries}{\llbracket}{\rrbracket}
\NewDocumentCommand{\set}{somm}{%
   \IfNoValueTF{#2}
    {\IfBooleanTF{#1}{\{#3 \mid #4\}}{\mleft\{ #3 \mathrel{}\middle\vert\mathrel{} #4 \mright\}}}
    {\mathopen{#2\{}#3 \mathrel{}#2\vert\mathrel{} #4\mathclose{#2\}}}%
  }
\NewDocumentCommand{\present}{somm}{%
   \IfNoValueTF{#2}
    {\IfBooleanTF{#1}{\langle#3 \mid #4\rangle}{\mleft\langle#3 \mathrel{}\middle\vert\mathrel{} #4 \mright\rangle}}
    {\mathopen{#2\langle}#3 \mathrel{}#2\vert\mathrel{} #4\mathclose{#2\rangle}}%
  }
\NewDocumentCommand{\inner}{somm}{%
   \IfNoValueTF{#2}
    {\IfBooleanTF{#1}{\langle#3 , #4\rangle}{\mleft\langle#3 , #4 \mright\rangle}}
    {\mathopen{#2\langle}#3 , #4\mathclose{#2\rangle}}%
  }
\newcommand{\CC}{\mathbb{C}}
\newcommand{\FF}{\mathbb{F}}
\newcommand{\PP}{\mathbb{P}}
\newcommand{\QQ}{\mathbb{Q}}
\newcommand{\RR}{\mathbb{R}}
\newcommand{\cH}{\mathcal{H}}
\newcommand{\fS}{\mathfrak{S}}
\newcommand{\nor}{\text{nor}}
\newcolumntype{C}{>{\raggedright\arraybackslash}X}
\newcommand*{\widebar}[1]{\mkern 1.5mu\overline{\mkern-1.5mu#1\mkern-1.5mu}\mkern 1.5mu}
\newcommand*{\cl}[1]{
\begingroup
    \setbox\z@=\hbox{\ensuremath{#1}}%
    \ifdimgreater{\wd\z@}{4em}{\mleft(#1\mright)^{-}}{\widebar{#1}}
\endgroup
}
\newcommand*{\interior}[1]{
\begingroup
    \setbox\z@=\hbox{\ensuremath{#1}}%
    \ifdimgreater{\wd\z@}{1.5em}{\mleft(#1\mright)^{\circ}}{\accentset{\circ}{#1}}
\endgroup
}
\newcommand\isom{\xrightarrow{\,\smash{\raisebox{-0.6ex}{\ensuremath{\sim}}}\,}}
\numberwithin{equation}{section}
\declaretheorem[sibling=equation]{theorem}
\declaretheorem[sibling=theorem,style=remark]{example}
\declaretheorem[sibling=theorem,style=definition]{definition}
\declaretheorem[sibling=theorem]{lemma}
\declaretheorem[sibling=theorem]{corollary}
\declaretheorem[sibling=theorem]{proposition}
\declaretheorem[sibling=theorem, style=remark]{remark}
\declaretheorem[numbered=no, title=Theorem]{theorem*}
\declaretheorem[numbered=no, title=Corollary]{corollary*}
\newcommand*{\Gr}{\mathrm{Gr}}
\newcommand*{\cV}{\mathcal{V}}
\newlist{singularity}{enumerate}{2}
\setlist[singularity,1]{label=(\Roman*),noitemsep, ref=\Roman*}
\setlist[singularity,2]{label=(\alph*),noitemsep, ref=\alph*}
\newcommand*{\type}[1]{\text{\ref{#1}}}
\def\paragraph{\@startsection{paragraph}{4}%
  \z@\z@{-\fontdimen2\font}%
  {\normalfont\bfseries}}
\begin{document}

\title{The space of cubic surfaces equipped with a line}
\author{Ronno Das}
\address{Department of Mathematics, University of Chicago, Chicago, IL 60637, USA}
\email{ronno@math.uchicago.edu}
\subjclass[2010]{Primary 55R80; Secondary 14N15, 14J70}

\begin{abstract}
The Cayley--Salmon theorem implies the existence of a $27$-sheeted covering space specifying lines contained in smooth cubic surfaces over $\CC$.
In this paper we compute the rational cohomology of the total space of this cover, using the spectral sequence in the method of simplicial resolution developed by Vassiliev.
The covering map is an isomorphism in cohomology (in fact of mixed Hodge structures) and the cohomology ring is isomorphic to that of $\PGL(4,\CC)$.
We derive as a consequence of our theorem that over the finite field $\FF_q$ the average number of lines on a cubic surface equals $1$ (away from finitely many characteristics); this average is $1 + O(q^{-1/2})$ by a standard application of the Weil conjectures.
\end{abstract}

\maketitle

\section{Introduction}

One of the first theorems of modern algebraic geometry and specifically enumerative geometry is the Cayley--Salmon theorem \cite{Cayley49}.
This classical theorem states that every smooth cubic surface (over an algebraically closed field, in particular $\CC$) contains exactly $27$ lines.
A cubic (hyper)surface in $\PP^3 = \CC P^3$ is the zero set $S = \cV(F)$ of a homogeneous polynomial $F$ of degree $3$ in $4$ variables.
The surface $S$ is singular (i.e.\ not smooth) if and only if the $20$ coefficients of $F$ are a zero of a discriminant polynomial $\Delta \from \CC^{20} \to \CC$.
Thus the space of smooth cubic surfaces is an open locus $M = M_{3,3} \vcentcolon = \PP^{19} \setminus \cV(\Delta)$.
The Cayley--Salmon theorem can be reinterpreted as a covering map $\pi \from \widetilde M \to M$, where $\widetilde M$ is the incidence variety of lines and smooth cubic surfaces (see \eqref{total-space-definition} and the preceding discussion for precise definitions).
The fiber $\pi^{-1}(S)$ over $S \in M$ is the set of $27$ lines on $S$.

The automorphism group of $\PP^3$ is $\PGL(4, \CC)$ and this group acts on lines and cubic surfaces, preserving smoothness.
In particular the covering map $\pi \from \widetilde M \to M$ is $\PGL(4, \CC)$-equivariant.
It was shown by Vassiliev (in \cite{Vassiliev99}) that the space $M$ has the same rational cohomology as $\PGL(4,\CC)$, and it follows from the results of Peters--Steenbrink (\cite{PS03}) that the orbit map given by $g \mapsto g(S_0)$ induces an isomorphism for any choice of $S_0 \in M$ (see~\cref{base-cohomology}).
See also \cite{Tommasi14}.

The main result of this paper is that the covering space $\widetilde M$ also has the same rational cohomology.
\begin{restatable}{theorem}{maintheorem}\label{main-theorem}
For a choice $(S_0,L_0) \in \widetilde M$, the orbit map $\PGL(4,\CC) \to \widetilde M$ given by $g \mapsto g(S_0,L_0)$ induces an isomorphism 
\[H^*(\widetilde M ; \QQ) \isom H^*(\PGL(4,\CC); \QQ) \cong \QQ[a_3,a_5,a_7]/(a_3^2,a_5^2,a_7^2) \dispunct,\]
where $a_i \in H^i(\PGL(4,\CC); \QQ)$. 
Since the composition $\PGL(4, \CC) \to \widetilde M \tolabel{\pi} M$ also induces an isomorphism on $H^*(\_; \QQ)$, the map 
\[\pi^* \from H^*(M; \QQ) \to H^*(\widetilde M; \QQ)\]
is an isomorphism.
Since the orbit map and $\pi$ are algebraic, the isomorphisms are of mixed Hodge structures.
\end{restatable}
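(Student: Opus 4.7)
The strategy is to compute $H^*(\widetilde M;\QQ)$ directly by applying Vassiliev's method of simplicial resolution to the discriminant cutting $\widetilde M$ out of a natural ambient variety, and then to upgrade the abstract calculation to the claim about the orbit map.

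First I would realize $\widetilde M$ as the complement of a discriminant in a smooth projective ambient space. Containment of a fixed line $L \in \Gr(2,4)$ in $S = \cV(F)$ is a codimension-$4$ linear condition on $F$ (the vanishing of $F|_L$, a binary cubic with $4$ coefficients), so the incidence variety $X = \set{(S,L)}{L \subset S}$ is a $\PP^{15}$-bundle over $\Gr(2,4)$ of total dimension $19$, and $\widetilde M = X \setminus \widetilde\Sigma$ where $\widetilde\Sigma \subset X$ is the locus of pairs $(S,L)$ for which $S$ is singular somewhere in $\PP^3$. The plan is to compute the Borel--Moore homology of $\widetilde\Sigma$ and feed it, via the long exact sequence of the pair $(X,\widetilde\Sigma)$ and Poincaré--Lefschetz duality on the smooth $\widetilde M$, into a computation of $H^*(\widetilde M;\QQ)$.

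The core step is the construction of a simplicial resolution $\abs{\widetilde\Sigma} \to \widetilde\Sigma$ in the spirit of Vassiliev, which is a rational homology equivalence and whose points encode a singular pair $(S,L)$ together with a formal convex combination of the singular points of $S$ in $\PP^3$. This space carries a natural filtration by the cardinality and combinatorial type of the underlying configuration of singularities, and each stratum fibers over a configuration space of points in $\PP^3$ with fiber an open simplex times a projective space (cubics through $L$ prescribed to be singular at the chosen points). The induced first-quadrant spectral sequence has $E^1$-terms assembled from the Borel--Moore homology of these strata with coefficients in the monodromy local systems permuting unordered configuration points, and converges to $H^{BM}_*(\widetilde\Sigma;\QQ)$.

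The main obstacle, as always in Vassiliev's framework, is the combinatorial and geometric bookkeeping of the stratification: enumerating configurations of singular points that can actually occur on a cubic surface containing $L$, computing the expected dimensions of the corresponding linear systems of cubics, and distinguishing carefully between singularities lying on $L$ and off of $L$ (for instance, two singular points on $L$ force $L$ to be a component of $S$, so such configurations live on a smaller-dimensional sublocus of $X$). With this bookkeeping in hand, one verifies that after all differentials the surviving $E^\infty$-terms assemble to the Poincaré polynomial $(1+t^3)(1+t^5)(1+t^7)$ of $\PGL(4,\CC)$.

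Finally, to deduce the statement about the orbit map, recall that by Peters--Steenbrink the orbit map $\PGL(4,\CC) \to M$ is a rational cohomology isomorphism. Since this orbit map factors as $\PGL(4,\CC) \to \widetilde M \tolabel{\pi} M$, and since $\widetilde M$ has been shown to have the same rational Betti numbers as $\PGL(4,\CC)$, a dimension count forces both $\pi^*$ and the orbit map $\PGL(4,\CC) \to \widetilde M$ to induce isomorphisms on $H^*(\_;\QQ)$. Compatibility with mixed Hodge structures is automatic, since both maps are morphisms of smooth complex algebraic varieties.
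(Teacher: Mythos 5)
Your closing step is sound and parallel to the paper's: once one knows that $H^*(\widetilde M;\QQ)$ has the same Betti numbers as $H^*(\PGL(4,\CC);\QQ)$, the factoring of the Peters--Steenbrink orbit-map isomorphism $\PGL(4,\CC) \to M$ through $\widetilde M$ forces $\pi^*$ to be injective, the orbit map on $\widetilde M$ to be surjective, and the dimension count upgrades both to isomorphisms; compatibility with Hodge structures is then automatic since the maps are algebraic. The paper uses essentially this numerology, though expressed fiberwise via a surjectivity lemma for $\Stab_{\GL(4)}(\ell) \to \widetilde X_\ell$.

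Where you genuinely diverge is in \emph{how} the Betti numbers of $\widetilde M$ are computed. You propose a single global Vassiliev-style resolution of the discriminant $\widetilde\Sigma$ inside the $\PP^{15}$-bundle $X \to \Gr(2,4)$, fed through the long exact sequence of the pair and Poincaré--Lefschetz duality. The paper instead works \emph{fiberwise}: for each fixed $\ell$, the fiber $\widetilde X_\ell$ is the complement of a conical hypersurface $\Sigma_\ell$ in the affine space $\Pi_\ell \cong \CC^{16}$, so Alexander duality takes the clean form $\widetilde H^i(\widetilde X_\ell) \cong \widebar H_{31-i}(\Sigma_\ell)$ and Vassiliev's machinery applies verbatim. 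The Grassmannian direction is then handled separately by a Serre spectral sequence comparison with the bundle $\Stab_{\GL(4)}(\ell) \hookrightarrow \GL(4) \to \Gr(2,4)$, and the final $\CC^\times$-quotient is absorbed by Leray--Hirsch. The fiberwise route buys you a standard affine ambient (where the conical-resolution and Alexander-duality arguments are well established) and lets the bundle structure do the heavy lifting; your global route would require re-deriving the rational-homology-equivalence property of the simplicial resolution in a projective (non-affine) ambient, together with an analysis of the nontrivial $H^*(X)$ and $H^{BM}_*(\widetilde\Sigma)$ interacting in the long exact sequence. That adaptation is plausible but not automatic.

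The more serious issue, however, is that the central computational content of the theorem --- the stratification of the singular loci by their configuration relative to $\ell$, the vanishing of $H^*(A_i;\pm\QQ)$ for most subtypes, the assembly of the $E^1$ page, and the degeneration argument --- is left as ``with this bookkeeping in hand, one verifies that \dots''. That bookkeeping \emph{is} the proof: it occupies the entirety of the paper's Section 3, involves a dozen separate fiber-bundle decompositions of configuration spaces with sign local systems, and its output (the precise Betti numbers of $\widetilde X_\ell$) is exactly what makes the final dimension count go through. Asserting the answer without carrying out or even indicating the structure of this computation leaves a genuine gap; as written, the proposal identifies the right plan and the right closing argument but does not establish the key intermediate fact that the Poincaré polynomial of $\widetilde M$ equals $(1+t^3)(1+t^5)(1+t^7)$.
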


\begin{remark}
In particular, $H^k(\widetilde M; \QQ)$ is pure of Tate type; the generator $a_{2k-1}$ is of bidegree $(k,k)$.
\end{remark}

The main tool in our proof of \cref{main-theorem} is simplicial resolution à la Vassiliev.
However the introduction of a line significantly increases the combinatorics of the casework.
We devote all of \cref{case-work} to this computation, while \cref{proof-section} contains the rest of the proof.

\subsection{Applications: moduli space, representations of \texorpdfstring{$W(E_6)$}{W(E\_6)} and point counts}\label{applications}

Before presenting a proof of \cref{main-theorem}, which we postpone to \cref{main-theorem-proof} and the particularly tedious details further to \cref{case-work}, we describe a few applications.
All of the corollaries in this section are corollaries to \cref{main-theorem}.

\subsubsection*{Cohomology of moduli spaces}
The map $\pi \from \widetilde M \to M$ is $\PGL(4,\CC)$ equivariant and each orbit (in either $M$ or $\widetilde M$) is closed (see~e.g.~\cite{ACT02}). 
Thus passing to the geometric quotient we get a covering map
\[\cH_{3,3}(1) \to \cH_{3,3} \dispunct,\]
where 
\[\cH_{3,3} = M/\PGL(4, \CC)\]
is the moduli space of smooth cubic surfaces and 
\[\cH_{3,3}(1) = \widetilde M/\PGL(4, \CC)\]
is the moduli space of cubic surfaces equipped with a line.
Note that both $\cH_{3,3}$ and $\cH_{3,3}(1)$ are coarse moduli spaces.
For example the Fermat cubic defined by $x^3 + y^3 + z^3 + w^3$ equipped with the line $\{x = y, z = w\}$ has non-trivial (but finite) stabilizer in $\PGL(4, \CC)$.
Using \cite[Theorem 2]{PS03}, which is a generalization of the Leray--Hirsch theorem, we have the following corollary.
\begin{corollary}\label{moduli-space}
The space $\cH_{3,3}(1)$ is $\QQ$-acyclic: $H^i(\cH_{3,3}(1); \QQ) = 0$ for $i > 1$.
\end{corollary}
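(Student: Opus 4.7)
The plan is to apply the Peters--Steenbrink generalization of Leray--Hirsch \cite[Theorem 2]{PS03}, which is already invoked in the paragraph preceding the corollary, to the $\PGL(4,\CC)$-action on $\widetilde M$. Writing $G = \PGL(4,\CC)$, the hypotheses are in place: $\widetilde M$ has closed $G$-orbits and admits a geometric quotient $\widetilde M \surjection \cH_{3,3}(1)$, and by \cref{main-theorem} the orbit map $G \to \widetilde M$ through any basepoint $(S_0,L_0)$ induces a surjection (in fact an isomorphism) on rational cohomology. The conclusion of that theorem is then an isomorphism of graded $\QQ$-vector spaces
\[H^*(\widetilde M; \QQ) \;\cong\; H^*(G; \QQ) \tensor H^*(\cH_{3,3}(1); \QQ) \dispunct.\]

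Next I would compare Poincaré polynomials. By \cref{main-theorem} the left-hand side has the Poincaré polynomial of $G$ itself, namely $(1+t^3)(1+t^5)(1+t^7)$. Dividing out this factor on the right forces the Poincaré polynomial of $\cH_{3,3}(1)$ to be identically $1$, yielding $H^i(\cH_{3,3}(1); \QQ) = 0$ for all $i > 0$, which is (at least) the desired vanishing.

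The main obstacle is not difficulty but bookkeeping around the hypotheses of \cite[Theorem 2]{PS03}: one must confirm that the nontrivial but finite stabilizers (such as the Fermat example flagged in the text) do not spoil applicability, and that working with a coarse rather than fine moduli space causes no trouble for rational cohomology. Once these standard points are dispatched, the corollary follows immediately by combining the cited Leray--Hirsch-type result with the dimension count above; no new Vassiliev-style computation is needed, since all the geometric input has already been packaged into \cref{main-theorem}.
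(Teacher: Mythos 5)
Your proposal is correct and follows essentially the same route as the paper: the paper itself invokes \cite[Theorem 2]{PS03} in the sentence immediately preceding the corollary and relies on \cref{main-theorem} for the cohomological input, exactly as you do. Your Poincaré-polynomial comparison in fact gives the slightly stronger vanishing $H^i(\cH_{3,3}(1);\QQ)=0$ for all $i>0$.
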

For comparison, it was already known by \cref{base-cohomology} that $\cH_{3,3}$ is $\QQ$-acyclic.
Various compactifications of $\cH_{3,3}$, $\cH_{3,3}(1)$ and other covers can be found in \cite{DGK05}, in particular the two moduli spaces mentioned here are rational.
Also relevant are the computation of $\pi_1(\cH_{3,3})$ (as an orbifold) by Looijenga \cite{Looijenga08}, the identification of a compactification of $\cH_{3,3}$ as a quotient of complex hyperbolic $4$-space by Allcock, Carlson and Toledo \cite{ACT02}.

\subsubsection*{The cohomology of the normal cover as a representation of \texorpdfstring{$W(E_6)$}{W(E\_6)}}
The combinatorics of how the $27$ lines intersect is extremely well-studied.
Let $L$ be the graph with vertices the $27$ lines and edges corresponding to intersecting pairs for the generic cubic surface \parencite{Cayley49}.
It was classically known that the automorphism group of $L$ is realized as the Galois group of the extension given by adjoining the coefficients defining the lines over the field containing the coefficients of a cubic form.
Camille Jordan proved \cite{Jordan89} that this group is the Weyl group $W(E_6)$ of the root system $E_6$ (see~also~\cite[Remark 23.8.2]{Manin86}).
The Galois group can also be realized as the monodromy of the covering space $\widetilde M \to M$ and hence the deck group of its normal closure; see~\cite{Harris79}.

The cover $\widetilde M \to M$ is in fact not normal (Galois): its normal closure is the space $\widetilde M_{\nor}$ consisting of pairs $(S,\alpha)$, where $\alpha$ is an identification of the intersection graph of the $27$ lines on $S$ with $L$.
The deck group of $\widetilde M_{\nor}$ is $W(E_6)$, as mentioned, and so $H^*(\widetilde M_{\nor}; \QQ)$ is a $W(E_6)$ representation.
We can restrict this representation to the index-$27$ subgroup that stabilizes a line, which can be identified with $W(D_5)$ (see~\cite{Naruki82}).
The intermediate cover corresponding to this $W(D_5)$ is exactly $\widetilde M$. 
We can now deduce the following corollary about $H^*(\widetilde M_{\nor}; \QQ)$ from \cref{main-theorem}.
\begin{corollary}\label{normal-cover}
For any non-trivial irreducible representation $V$ of $W(E_6)$ appearing in $H^*(\widetilde M_{\nor}; \QQ)$, the restriction of $V$ to $W(D_5)$ cannot have a trivial summand.
Equivalently, the non-trivial irreducible representations of $W(E_6)$ that occur in the $27$-dimensional permutation representation given by the action on left cosets of $W(D_5)$ in $W(E_6)$ cannot occur in $H^*(\widetilde M_{\nor}; \QQ)$.
\end{corollary}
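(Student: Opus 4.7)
The plan is to combine the isomorphism $\pi^* \from H^*(M; \QQ) \isom H^*(\widetilde M; \QQ)$ of \cref{main-theorem} with covering-space transfer and Frobenius reciprocity. The normal covers $\widetilde M_{\nor} \to \widetilde M$ and $\widetilde M_{\nor} \to M$ have deck groups $W(D_5)$ and $W(E_6)$ respectively, so with rational coefficients the transfer yields canonical isomorphisms
\[H^*(\widetilde M; \QQ) \cong H^*(\widetilde M_{\nor}; \QQ)^{W(D_5)}, \qquad H^*(M; \QQ) \cong H^*(\widetilde M_{\nor}; \QQ)^{W(E_6)}.\]
Under these identifications $\pi^*$ becomes the inclusion of $W(E_6)$-invariants into $W(D_5)$-invariants, and \cref{main-theorem} asserts that this inclusion is in fact an equality of subspaces of $H^*(\widetilde M_{\nor}; \QQ)$.

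Next I would decompose $H^*(\widetilde M_{\nor}; \QQ) \cong \bigoplus_V V^{\oplus m_V}$ as a representation of $W(E_6)$, the sum ranging over irreducibles $V$. Taking invariants termwise,
\[\dim H^*(\widetilde M_{\nor}; \QQ)^{W(D_5)} = \sum_V m_V \dim V^{W(D_5)}, \qquad \dim H^*(\widetilde M_{\nor}; \QQ)^{W(E_6)} = m_{\mathrm{triv}}.\]
By Frobenius reciprocity, $\dim V^{W(D_5)}$ equals the multiplicity of $V$ in $\Ind_{W(D_5)}^{W(E_6)} \QQ$, which is exactly the $27$-dimensional permutation representation on left cosets $W(E_6)/W(D_5)$. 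Equating the two dimensions forces $m_V \cdot \dim V^{W(D_5)} = 0$ for every non-trivial irreducible $V$, i.e.\ either $V$ does not appear in $H^*(\widetilde M_{\nor}; \QQ)$ or its restriction to $W(D_5)$ has no trivial summand. This is exactly the corollary in both of its formulations.

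There is no substantial obstacle here once \cref{main-theorem} is in hand; the argument is entirely formal representation theory. The only point requiring genuine care is verifying that $\pi^*$ really does correspond to the inclusion of invariants under the two transfer isomorphisms, which is a routine consequence of the functoriality of the transfer with respect to the intermediate cover $\widetilde M_{\nor} \to \widetilde M \to M$.
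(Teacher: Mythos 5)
Your proposal is correct and follows essentially the same approach as the paper: both use the transfer isomorphisms to identify $H^*(M;\QQ)$ and $H^*(\widetilde M;\QQ)$ with the $W(E_6)$- and $W(D_5)$-invariants of $H^*(\widetilde M_{\nor};\QQ)$ respectively, apply \cref{main-theorem} to equate these, and invoke Frobenius reciprocity for the permutation-representation reformulation. The paper's proof is a two-line version of exactly this argument; you have simply spelled out the multiplicity bookkeeping.
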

\begin{proof}
By \cref{main-theorem} and transfer, \[
H^*(\widetilde M_{\nor}; \QQ)^{W(E_6)} = H^*(M; \QQ) = H^*(\widetilde M; \QQ) = H^*(\widetilde M_{\nor}; \QQ)^{W(D_5)} \dispunct.\]
The second statement is equivalent to the first by Frobenius reciprocity.
\end{proof}
Computing the cohomology $H^*(\widetilde M_{\nor}; \QQ)$ (as a $W(E_6)$ representation) would be an obvious and major generalization of \cref{main-theorem}.
While the above corollary provides a restriction towards which irreducible representations can occur, it only rules out a small fraction: the order of $W(E_6)$ is $51840$, and it has $24$ non-trivial irreducible representations (see~\cite[428--429]{Carter85} for a character table).

There are other intermediate covers of $M$, by marking different configurations of the $27$ lines.
For instance, taking unordered triples of lines that intersect pairwise, we get a $45$-sheeted cover marking the `tritangents' of a cubic surface.
See \cite{Naruki82} and the appendix by Looijenga for more on this cover and its quotient under $\PGL(4,\CC)$.

\subsubsection*{Lines over $\FF_q$}
The spaces $\widetilde M$ and $M$ as defined above are (the complex points of) quasiprojective varieties defined by integer polynomials. 
To be more explicit, the discriminant $\Delta$ is an integer polynomial, as are the polynomials defining the incidence of a line and a cubic surface.
For a finite field $\FF_q$ of characteristic $p$, we can base change to $\FF_q$.
That is, reducing the defining polynomials $\operatorname{mod} p$ defines spaces 
\[M(\FF_q) \subset \PP^{19}(\FF_q) \dispunct,\]
\[\widetilde M(\FF_q) \subset \PP^{19}(\FF_q) \times \Gr(2,4)(\FF_q) \dispunct,\]
and a projection map
\[\pi \from \widetilde M(\FF_q) \to M(\FF_q) \dispunct.\]

For $p \ne 3$, the discriminant $\Delta$ continues to characterize singular polynomials, so $M(\FF_q)$ is the space of smooth cubic surfaces defined over $\FF_q$ (where a homogeneous cubic polynomial is smooth if it is smooth at all $\widebar{\FF_q}$ points).
Similarly, $\widetilde M(\FF_q)$ is the space of pairs $(S,L)$ of smooth cubic surfaces $S$ and lines $L$ defined over $\FF_q$ such that $L \subset S$.
Thus, $\dfrac{\# \widetilde M(\FF_q)}{\# M(\FF_q)}$ is the average number of $\FF_q$-lines on a cubic surface defined over $\FF_q$.
The Grothendieck--Lefschetz fixed point formula (see~e.g.~\cite{Milne13}) lets us use our results to deduce consequences about the cardinality of $\#\widetilde M(\FF_q)$.

\begin{remark}
The fact that $\widetilde M$ is a connected cover of $M$ already implies $H^0(\widetilde M; \QQ) \cong \QQ$.
Given Deligne's theorem \cite[Théorème 3.3.1]{Deligne80} we get that both $\#M(\FF_q)$ and $\# \widetilde M(\FF_q)$ are $q^{19}(1 + O(q^{-1/2}))$, since $\dim M = \dim \widetilde M = 19$.
Hence the average number of lines on a $\FF_q$-cubic surface is $1 + O(q^{-1/2})$ as $q \to \infty$.
One needs much more information to compute this number exactly.
\end{remark}

\begin{corollary}\label{average-lines}
There is a finite set of characteristics, so that for a fixed $q$ with $p$ not in this set, 
\[\# M(\FF_q) = \# \widetilde M(\FF_q) = q^4 (\# \PGL(4, \FF_q)) = q^4\frac{(q^4-1)(q^4-q)(q^4-q^2)(q^4-q)}{q-1}\dispunct.\]
Thus the average number of lines defined over $\FF_q$ on a smooth cubic surface defined over $\FF_q$ is exactly $1$.
\end{corollary}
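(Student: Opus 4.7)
The plan is to apply the Grothendieck--Lefschetz fixed point formula
\[
\#M(\FF_q) = \sum_i (-1)^i \tr\bigl(\Frob_q \bigm| H^i_c(M_{\overline{\FF_q}}; \QQ_\ell)\bigr),
\]
and its analogue for $\widetilde M$, then substitute the cohomology groups provided by \cref{main-theorem}. The finite set of excluded characteristics is precisely what is required for a standard spreading-out argument to match étale cohomology over $\overline{\FF_q}$ with singular cohomology over $\CC$.

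First I would fix a prime $\ell$ and restrict to $p \neq \ell, 3$ large enough that both $M$ and $\widetilde M$ extend to smooth schemes over $\Spec \ZZ[1/N]$ for some $N$; by smooth base change and the Artin comparison theorem, this identifies the $\ell$-adic cohomology over $\overline{\FF_q}$ with the Betti cohomology over $\CC$ as $\Gal(\overline{\FF_q}/\FF_q)$-modules, and the identification preserves weights. By \cref{main-theorem} the cohomology of both $M$ and $\widetilde M$ is then one-dimensional in degrees $\{0, 3, 5, 7, 8, 10, 12, 15\}$ and zero elsewhere. Purity of Tate type together with the remark that $a_{2k-1}$ has bidegree $(k,k)$ forces Frobenius to act on each monomial $a_{i_1}\cdots a_{i_r}$ by the scalar $q^{k_1 + \cdots + k_r}$.

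Next I would pass from $H^*$ to $H^*_c$ via Poincaré duality, valid because $M$ and $\widetilde M$ are smooth of complex dimension $19$: the Frobenius eigenvalue on $H^i_c$ is $q^{19}/\alpha$ whenever $\alpha$ is the eigenvalue on the dual class in $H^{38-i}$. Assembling the alternating sum yields
\[
\#M(\FF_q) = q^{19} - q^{17} - q^{16} - q^{15} + q^{14} + q^{13} + q^{12} - q^{10},
\]
and a direct expansion of $q^4 \cdot q^6(q^2-1)(q^3-1)(q^4-1)$ shows that this equals $q^4 \cdot \#\PGL(4,\FF_q)$. The identical computation for $\widetilde M$, using the part of \cref{main-theorem} giving $H^*(\widetilde M;\QQ) \cong H^*(M;\QQ)$ as mixed Hodge structures, yields $\#\widetilde M(\FF_q) = \#M(\FF_q)$, and dividing gives the average exactly $1$.

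The only non-trivial point is justifying that Frobenius weights on the $\ell$-adic cohomology agree with the Hodge bigradings known over $\CC$ for all but finitely many $p$. This is a routine spreading-out: once $M$ and $\widetilde M$ are extended to smooth models over an open subscheme of $\Spec \ZZ$, smooth base change and comparison reduce the question to the complex computation of \cref{main-theorem}, and Deligne's purity theorem pins down the eigenvalues using only the bigrading information already in hand.
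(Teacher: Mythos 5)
Your overall outline matches the paper's: Grothendieck--Lefschetz, comparison with the Betti computation of \cref{main-theorem} away from finitely many primes, and the explicit alternating sum (which you carry out correctly --- the eight monomials in $\QQ[a_3,a_5,a_7]/(a_3^2,a_5^2,a_7^2)$ give exactly $q^{19}-q^{17}-q^{16}-q^{15}+q^{14}+q^{13}+q^{12}-q^{10} = q^4\cdot\#\PGL(4,\FF_q)$). But there is a genuine gap in how you pin down the Frobenius eigenvalues.

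You assert that ``purity of Tate type together with the remark that $a_{2k-1}$ has bidegree $(k,k)$ forces Frobenius to act \dots by the scalar $q^{k_1+\cdots+k_r}$,'' and again that ``Deligne's purity theorem pins down the eigenvalues using only the bigrading information already in hand.'' This does not follow. Purity controls only the \emph{absolute values} of the Frobenius eigenvalues (the weight), and the comparison theorem you invoke is an isomorphism of $\QQ_\ell$-vector spaces, not of Galois modules; Betti cohomology over $\CC$ carries no Frobenius action to compare against. Knowing a class has Hodge type $(k,k)$ over $\CC$ does not by itself imply the corresponding $\ell$-adic class is fixed by Frobenius up to the scalar $q^k$ --- that implication is essentially a form of the Tate conjecture and is not a general theorem. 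Without it, the alternating sum is not determined by the Betti data.

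The missing ingredient --- which is what the paper implicitly uses and which \cref{main-theorem} is phrased to supply --- is that the cohomology isomorphism comes from an \emph{algebraic} map: the orbit map $\PGL(4)\to\widetilde M$ (and $\pi\from\widetilde M\to M$). Choose a basepoint $(S_0,L_0)$ over $\ZZ[1/N]$; the orbit map is then a morphism of $\ZZ[1/N]$-schemes, and after spreading out it induces a Frobenius-equivariant isomorphism on $\ell$-adic cohomology over $\overline{\FF_q}$ for almost all $p$. This reduces the trace computation for $M$ and $\widetilde M$ directly to that for $\PGL(4)$, whose cohomology classes are genuinely algebraic (they are cycle classes of degeneracy loci in $\Mat(4)$), so Frobenius does act by the expected powers of $q$. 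Concretely, the trace formula gives $\#M(\FF_q)=q^{19}\sum(-1)^i\tr(\Frob\mid H^i_\et(M)^\vee)=q^{19}\cdot q^{-15}\#\PGL(4,\FF_q)=q^4\#\PGL(4,\FF_q)$, and identically for $\widetilde M$, without ever needing to argue about eigenvalues on $H^*(M)$ in isolation. If you route the argument through the orbit map in this way, your computation is complete; as written, the eigenvalue claim is unjustified.
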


To the best of our knowledge, the point count for $\widetilde M(\FF_q)$ and the consequence about the average number of lines is new.

\begin{proof}[Proof of \cref{average-lines}]
The varieties $M$ and $\widetilde M$ are smooth since $M$ is open in $\PP^{19}$.
For a smooth quasiprojective variety $Y$, the $\FF_q$ points are exactly the fixed points of $\Frob_q$ on $Y(\widebar{\FF_q})$, and $\#Y(\FF_q)$ is determined by the Grothendieck--Lefschetz fixed point formula (see~e.g.~\cite{Milne13}):
\[\# Y(\FF_q) = q^{\dim Y}\sum_{i \ge 0} (-1)^i \tr(\Frob_q \from H^i_{\et}(Y; \QQ_\ell)^\vee) \dispunct,\]
where $\ell$ is a prime other than $p$.
Further, there are comparison theorems implying isomorphisms
\[H^i_\et(Y; \QQ_\ell) \cong H^i(Y(\CC); \QQ_\ell) \cong H^i(Y(\CC); \QQ) \tensor \QQ_\ell \dispunct,\]
away from a finite set of characteristics (see~e.g.~\cite[Théorème 1.4.6.3, Théorème 7.1.9]{Deligne77}).
In particular, as a corollary of \cref{main-theorem} we obtain $\# \widetilde M(\FF_q) = \# M(\FF_q) = q^4(\#\PGL(4,\FF_q))$ and hence the corollary.
\end{proof}

%

\begin{remark}
One can define $\cH_{3,3}(\FF_q)$ and $\cH_{3,3}(1)(\FF_q)$ as base-changes of $\cH_{3,3}$ and $\cH_{3,3}(1)$ from above.
Using an analogue of the Groethendieck--Lefschetz fixed-point formula, it is possible to conclude that
\[\# \cH_{3,3}(1)(\FF_q) = \# \cH_{3,3}(\FF_q) = q^4 \dispunct,\]
although one needs to be more careful in interpreting these `point counts' mean.
However, a deeper discussion of the arguments involved is out of the scope of this paper.
\end{remark}

\subsection{Acknowledgments}
I would like to thank Benson Farb for his invaluable advice and comments throughout the composition of this paper and also for suggesting the problem.
I am also grateful to Weiyan Chen, Nir Gadish, Sean Howe and Akhil Mathew for many helpful conversations.
I am grateful to Igor Dolgachev for pointing me towards some existing results about moduli spaces of cubic surfaces.
Finally, I would like to thank Maxime Bergeron, Priyavrat Deshpande, Eduard Looijenga and Jesse Wolfson for their helpful comments to make the paper more readable.

\section{Rational cohomology of the incidence variety}\label{main-section}

\subsection{Definitions and setup}
From now on we will work over the field $\CC$ of complex numbers.
Let $X = X_{3,3}$ be the space of \emph{smooth} homogeneous degree $3$ (complex) polynomials over $4$ variables, for concreteness a subset of $\CC[x,y,z,w]_3 \cong \CC^{20}$.
A polynomial $F \in \CC[x,y,z,w]_3$ is smooth precisely when $\{F_x,F_y,F_z,F_w\}$ do not have a common root, by Euler's formula.
This is equivalent to a certain `discriminant' in the coefficients not vanishing; there is a polynomial $\Delta \from \CC^{20} \to \CC$ with integer coefficients that vanishes on (the coefficients of) $F$ if and only if $F$ is not smooth.
In other words, $X$ is the complement of the \emph{discriminant locus}, $\Sigma = \cV(\Delta) \subset \CC^{20}$.

We also have the `incidence variety' of a line and a (not necessarily smooth) cubic polynomial
\[\Pi = \set{(F,L)}{F|_L \equiv 0} \subset \CC[x,y,z,w]_3 \times \Gr(2,4) \dispunct,\]
where $\Gr(2,4)$ is the Grassmannian of lines in $\PP^3$ (that is, $2$-planes in $\CC^4$).
This space comes equipped with two projections.
The first, $\pi \from (F,L) \mapsto F$ forgets the line, and we denote the inverse image $\pi^{-1}(X)$ of $X$ by $\widetilde X$, which by (a version of) the Cayley--Salmon theorem is a $27$-sheeted cover $\pi \from \widetilde X \to X$.

The second projection is to $\Gr(2,4)$, given by $(F,L) \mapsto L$, and is a fiber bundle with fiber $\Pi_\ell \cong \CC^{16}$ over $\ell \in \Gr(2,4)$.
To be explicit, $\Pi_\ell$ is the space of (not necessarily smooth) cubic polynomials that vanish on $\ell$.
The restriction of the projection to $\widetilde X$ is also a fiber bundle, and we will denote the fiber over $\ell$ by $\widetilde X_{\ell}$, this is the space of smooth homogeneous cubic polynomials in $4$ variables that vanish on $\ell$.
Let 
\[\Sigma_\ell = \Pi_\ell \setminus \widetilde X_\ell = \Sigma \cap \Pi_\ell \dispunct.\]

To go from the space of polynomials to the space of cubic surfaces, we need to quotient by the action of $\CC^\times$.
Namely, given a homogeneous cubic polynomial $F$ and $\lambda \in \CC^\times$, the product $\lambda F$ is another homogeneous cubic polynomial which defines the same surface $\cV(F) = \cV(\lambda F)$ and $F$ is smooth if and only if $\lambda F$ is.
Alternatively viewed, $\Delta$ is a homogeneous polynomial and $\Sigma$ is a conical hypersurface in $\CC^{20}$, so passing to the quotient by $\CC^\times$ produces spaces 
\[M = X_{3,3}/\CC^\times \subset \PP^{19} \dispunct,\] 
\begin{equation}\widetilde M = \widetilde X/\CC^\times \subset M \times \Gr(2,4) \label{total-space-definition}\end{equation}
and a covering map $\widetilde M \to M$, which we will also denote by $\pi$.

The map $\widetilde M \to \Gr(2,4)$ continues to be a fiber bundle, we denote the fiber over $\ell \in \Gr(2,4)$ by 
\[\widetilde M_\ell = \set{(S,\ell)}{S \in M, S \supset \ell}\dispunct.\]
All these spaces and the maps described so far fit into the following (somewhat clumsy) commuting diagram:
\begin{equation}\label{bundle-map-on-grassmannian}
\begin{tikzcd}
\widetilde X_\ell \arrow[rd, hook] \arrow[rrr, "\CC^\times"] & & & \widetilde M_\ell \arrow[rd, hook] &  \\
 & \widetilde X \arrow[ld, "27"] \arrow[rrr, "\CC^\times"] & & & \widetilde M \arrow[ld, "27"] \\
X \arrow[rrr, near end, "\CC^\times"] & & & M &  \\
 & \Gr(2,4) \arrow[rrr, equal] \arrow[from=uu, crossing over] & & & \Gr(2,4) \arrow[from=uu, crossing over]
\end{tikzcd}
\end{equation}

There is one more action to consider, which is important for both our theorem and its proof.
As mentioned in the introduction, $\GL(4) \vcentcolon = \GL(4,\CC)$ acts on $\CC^4$ and $\PGL(4) = \GL(4)/(\CC^\times I)$ acts on the quotient $\PP^3$.
There are induced actions on the spaces defined above: on $X$ and $\widetilde X$ by $\GL(4)$; on $M$ and $\widetilde M$ by $\PGL(4)$.
The action of $\GL(4)$ on $\Gr(2,4)$ also factors through $\PGL(4)$.
Fixing a line $\ell \in \Gr(2,4)$, the respective stabilizers in $\GL(4)$ and $\PGL(4)$ act on the fibers $\widetilde X_\ell$ and $\widetilde M_\ell$.
If we fix a basepoint $(F_0,L_0) \in \widetilde X$, and set $S_0 = \cV(F_0)$ so that $(S_0,L_0) \in \widetilde M$, we get orbit maps $g \mapsto g(S_0,L_0) = (g\cdot S_0, g \cdot L_0)$, and so on.
Then we also have the following commuting diagram:
\begin{equation}\label{bundle-map-orbit}
\begin{tikzcd}
\CC^\times \arrow[rd, hook] \arrow[r,"z \mapsto z^3"] & \CC^\times \arrow[rd, hook] \arrow[r] & \CC^\times \arrow[rd, hook]\\ 
& \GL(4) \arrow[r] \arrow[d] & \widetilde X  \arrow[d] \arrow[r,"\pi"] & X \arrow[d] \\
& \PGL(4) \arrow[r] & \widetilde M  \arrow[r,"\pi"] & M
\end{tikzcd}
\end{equation}
All the four maps in the bottom-left square are in fact maps of bundles over the same base $\Gr(2,4)$, and all the vertical maps are bundles with fiber $\CC^\times$.
The second and third vertical maps are elaborated in the previous diagram (\ref{bundle-map-on-grassmannian}).

\begin{remark}
It is worth noting that the map on the fibers $\CC^\times \to \CC^\times$ induced by the first horizontal map is not identity, the matrix $\omega I$ acts by $\omega^3 = 1$ on a cubic polynomial $F$.
As indicated, it is the degree $3$ map $z \mapsto z^3$, which is an isomorphism with rational coefficients, so this does not affect our computations.
\end{remark}

\begin{remark}
Since $\widetilde M$ is connected, the orbit maps for different choices of basepoint $(S_0, L_0) \in \widetilde M$ are homotopic.
\end{remark}

As mentioned in the introduction, Vassiliev's results imply that $M$ and $\PGL(4)$ have the same rational cohomology.
\begin{theorem}[Vassiliev \cite{Vassiliev99}, Peters--Steenbrink \cite{PS03}] \label{base-cohomology}
The map $\PGL(4) \to M$ given by $g \mapsto g(S_0)$ induces an isomorphism 
\[H^*(M; \QQ) \isom H^*(\PGL(4); \QQ) \dispunct.\]
\end{theorem}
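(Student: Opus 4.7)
The plan is to combine Vassiliev's method of simplicial resolution with the Peters--Steenbrink descent theorem. I would first compute $H^*(M;\QQ)$ as a graded vector space, then separately promote the computation to an isomorphism induced by the orbit map.

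For the cohomology computation, I pass to the affine cone: $X = \CC^{20} \setminus \Sigma$ is a $\CC^\times$-bundle over $M$, so $H^*(X;\QQ)$ determines $H^*(M;\QQ)$ via the Gysin sequence (modulo the Euler class in $H^2(M;\QQ)$, a group that will turn out to vanish). Alexander duality in $\CC^{20}$ converts $H^*(X;\QQ)$ into the Borel--Moore homology $H^{BM}_*(\Sigma;\QQ)$ of the discriminant. I would compute the latter through a simplicial resolution $|\Sigma| \surjection \Sigma$ whose fiber over $F \in \Sigma$ realizes the singular set $\mathrm{Sing}\,\cV(F) \subset \PP^3$ as a geometric simplex; this resolution is a proper homotopy equivalence and so preserves $H^{BM}_*$. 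Filtering $|\Sigma|$ by the configuration type of $\mathrm{Sing}\,\cV(F)$ produces a spectral sequence whose columns involve configuration spaces of singular points fibered over the affine loci of cubic polynomials singular at those points. Enumerating the configurations of singularities admissible on a cubic surface and computing each contribution is what I expect to be the main obstacle; this is the prototypical Vassiliev casework, analogous to what the paper's author performs in \cref{case-work} for the cover $\widetilde M$. The hoped-for outcome is that after cancellation the surviving classes assemble into $\QQ[a_3,a_5,a_7]/(a_3^2,a_5^2,a_7^2)$, the known rational cohomology of $\PGL(4,\CC)$.

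To upgrade this bare equality of cohomology groups to an isomorphism realized by the orbit map, I would apply \cite[Theorem 2]{PS03}. The $\PGL(4)$-action on $M$ has closed orbits (cf.~\cite{ACT02}) with finite generic stabilizer, so the orbit map factors as $\PGL(4) \surjection \PGL(4)/\Stab(S_0) \hookrightarrow M$, where the first arrow is a finite cover and hence a rational-cohomology isomorphism. The computation above, combined with a transfer argument along the finite-stabilizer fibration $M \to \cH_{3,3}$, shows that the geometric quotient $\cH_{3,3} = M/\PGL(4)$ is $\QQ$-acyclic. This $\QQ$-acyclicity of the base is precisely the hypothesis Peters--Steenbrink's generalization of Leray--Hirsch requires to conclude that the inclusion of any single orbit induces an isomorphism $H^*(M;\QQ) \isom H^*(\PGL(4)/\Stab(S_0);\QQ)$. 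Composing with the finite cover $\PGL(4) \surjection \PGL(4)/\Stab(S_0)$ yields the orbit-map isomorphism of the theorem.
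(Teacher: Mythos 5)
The paper does not reprove this theorem (it cites \cite{Vassiliev99,PS03}), but the analogous result for $\widetilde X$ is proved via \cref{surjection} and the proposition that follows it, which indicates the intended mechanism. Your overall plan is the right one --- Vassiliev's simplicial resolution for the cohomology groups, Peters--Steenbrink to realize the isomorphism by the orbit map --- but your final paragraph has a genuine gap. You assert that a \emph{transfer argument along the finite-stabilizer fibration} $M \to \cH_{3,3}$ yields the $\QQ$-acyclicity of $\cH_{3,3}$, and that this acyclicity is the hypothesis required by \cite[Theorem 2]{PS03}. Both claims fail: the fibers of $M \to \cH_{3,3}$ are full $15$-dimensional $\PGL(4)$-orbits, not finite sets, so transfer does not apply; and the hypothesis of \cite[Theorem 2]{PS03} is that the orbit map $H^*(M;\QQ) \to H^*(\PGL(4)/\Stab(S_0);\QQ)$ is \emph{surjective}, not that the base is acyclic. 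The $\QQ$-acyclicity of $\cH_{3,3}$ is a \emph{consequence} of the theorem, not a premise of it, so your argument is circular.

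The missing ingredient is the surjectivity of the orbit map on rational cohomology. In \cite[Section 6]{PS03}, and in the paper's \cref{surjection} for the incidence variety, this is established by exhibiting explicit algebraic cycles in the discriminant --- polynomials singular at a fixed point, polynomials singular somewhere on a fixed hyperplane, and so on --- whose Alexander duals pull back under the orbit map to the multiplicative generators of $H^*(\PGL(4);\QQ)$. Once surjectivity is in hand, Vassiliev's equality of Poincaré polynomials forces the orbit map to be an isomorphism directly (a surjection between finite-dimensional vector spaces of the same dimension is an isomorphism); the $\QQ$-acyclicity of $\cH_{3,3}$ then follows afterward via Leray--Hirsch rather than feeding into it.
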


By transfer we know that $\pi^* \from H^*(M; \QQ) \to H^*(\widetilde M; \QQ)$ is an injection.
This also follows from the fact that the orbit map in the above theorem factors through $\widetilde M$.
In fact, there is no new cohomology that appears in this cover, as in \cref{main-theorem}.

\subsection{Proof of \cref*{main-theorem} and the role of simplicial resolution}
\label{proof-section}
Vassiliev's method of simplicial resolution works by first reducing the computation of the cohomology of the discriminant complement $X$ to computing the (Borel--Moore) homology of the discriminant locus $\Sigma$ via Alexander duality.
The space $\Sigma$ consisting of the singular cubic surfaces is itself highly singular, and stratifies based on the how big the singular set of each $F \in \Sigma$ is.
Applying the spectral sequence of a filtration to this stratification produces a spectral sequence converging to $\widebar H_*(\Sigma) = H_*^{\mathrm{BM}}(\Sigma)$ (Borel--Moore or compactly supported homology).

While $\widetilde M$ or $\widetilde X$ is not an open subset of a vector space, recall that the fiber $\widetilde X_\ell$ over $\ell$ of the map $\widetilde X \to \Gr(2,4)$ is open in the vector space $\Pi_\ell$ of polynomials vanishing on $\ell$.
So we can apply the Vassiliev spectral sequence to each $\widetilde X_\ell$ to find $H^*(\widetilde X_\ell; \QQ)$.
For this, we need to stratify $\Sigma_\ell = \Sigma \cap \Pi_\ell$ by not just how big the singular sets are, but how they are configured with respect to the line $\ell$.
These are the types and subtypes described in \cref{plan}.
For now we will assume that we can perform this computation (which takes up all of \cref{case-work}), and when needed we refer to the answer described in \cref{cohomology-of-X-l}.

\begin{lemma}\label{hypersurface-complement}
Let $f \from \CC^n \to \CC$ be a non-constant homogeneous polynomial of degree $d$, so that $\cV(f)$ is a conical hypersurface; let its complement be $Y = \CC^n \setminus \cV(f)$.
Let $\PP Y = Y /\CC^\times = \PP^{n-1} \setminus \cV_\PP(f)$ be the complement of the projective hypersurface given by the same polynomial $f$.
Then $H^*(Y; \QQ) \cong H^*(\CC^\times) \tensor H^*(\PP Y)$.
\end{lemma}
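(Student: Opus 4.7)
The plan is to realize $Y$ as a $d$-fold covering space of $\PP Y \times \CC^\times$ and then to invoke the transfer isomorphism, noting that the deck action will be trivial on rational cohomology because it factors through the connected group $\CC^\times$. The Künneth formula then completes the argument.

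First I would define the map
\[\psi \from Y \to \PP Y \times \CC^\times, \qquad v \mapsto ([v], f(v)),\]
which is well-defined because $v \ne 0$ and $f(v) \ne 0$ for $v \in Y$. Since $f$ is homogeneous of degree $d$, two points $v,w \in Y$ satisfy $\psi(v) = \psi(w)$ if and only if $w = \zeta v$ for some $\zeta \in \CC^\times$ with $\zeta^d = 1$. Hence the fibers of $\psi$ are exactly the $\mu_d$-orbits for the scalar action on $Y$, and $\psi$ is surjective by extracting $d$-th roots. Using a local section of the principal $\CC^\times$-bundle $Y \to \PP Y$, one trivializes $\psi$ locally as $(u,\lambda) \mapsto (u, \lambda^d g(u))$ for some nonvanishing holomorphic $g$; this shows $\psi$ is an unramified $d$-sheeted Galois cover with deck group $\mu_d$ acting by scalar multiplication.

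Next I would observe that the scalar action of $\mu_d$ on $Y$ is the restriction of the action of the path-connected group $\CC^\times$. Any path in $\CC^\times$ from $1$ to a given $\zeta \in \mu_d$ yields a homotopy from the identity to multiplication by $\zeta$ on $Y$, so $\mu_d$ acts trivially on $H^*(Y; \QQ)$. The transfer isomorphism for a finite Galois cover then gives
\[H^*(\PP Y \times \CC^\times; \QQ) \isom H^*(Y; \QQ)^{\mu_d} = H^*(Y; \QQ),\]
and the Künneth formula finishes the computation.

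There is no substantive obstacle here: the only content beyond elementary covering space theory is the observation that the natural map $\psi$ is a Galois cover whose deck group is absorbed into a connected group, hence acts trivially on rational cohomology. An alternative route via the Serre spectral sequence of the principal $\CC^\times$-bundle $Y \to \PP Y$ would work equally well, the input being that the associated line bundle is $\cO(-1)|_{\PP Y}$, whose $d$-th tensor power is trivialized by $f$ itself, forcing the rational Euler class to vanish and the $d_2$ differential to be zero; the covering-space argument above merely packages this observation more economically.
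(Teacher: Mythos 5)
Your proof is correct, but it takes a genuinely different route from the paper's. The paper's proof is a one-step application of Leray--Hirsch to the $\CC^\times$-bundle $Y \to \PP Y$: the class $f^*[\CC^\times] \in H^1(Y;\QQ)$ restricts to each fiber $\CC^\times a$ via $\lambda \mapsto \lambda^d f(a)$, which is degree $d \ne 0$, hence a generator of $H^1$ of the fiber over $\QQ$, and Leray--Hirsch immediately gives the tensor decomposition. Your argument instead constructs the explicit finite covering $\psi \from Y \to \PP Y \times \CC^\times$, $v \mapsto ([v], f(v))$, checks that it is Galois with deck group $\mu_d$ acting by scalars, observes that this action is trivial on $H^*(Y;\QQ)$ because $\mu_d$ sits inside the connected group $\CC^\times$, and then applies transfer plus K\"unneth. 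Both proofs hinge on the same geometric input --- that $f$ has nonzero degree $d$ on fibers of $Y \to \PP Y$ --- but the paper's packaging is shorter and avoids any covering-space bookkeeping, while yours gives a somewhat stronger conclusion along the way (it identifies $Y$ up to a finite connected cover with the product $\PP Y \times \CC^\times$, not merely its rational cohomology), and your closing remark correctly identifies the Serre spectral sequence / vanishing Euler class interpretation that underlies both.
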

\begin{proof}
We have a fiber bundle
\[\begin{tikzcd}
\CC^\times \rar[hook] & Y \dar[->>]\\
& \PP Y
\end{tikzcd}\]
and for any fiber $\CC^\times a$, the map $\CC^\times a \hookrightarrow Y \tolabel{f} \CC^\times$ is given by $\lambda \mapsto \lambda^d f(a)$, which is degree $d\ne 0$ on $\CC^\times$ and hence an isomorphism on $H^*(\CC^\times; \QQ)$.
This implies the conclusion by the Leray--Hirsch theorem.
\end{proof}

\begin{lemma}\label{surjection}
For a fixed $\ell \in \Gr(2,4)$, let $\Stab_{\GL(4)}(\ell)$ be the stabilizer of $\ell$ in $\GL(4)$.
Then for a choice of basepoint $F_0 \in \widetilde X_\ell$, the orbit map $\Stab(\ell) \to \widetilde X_\ell$ given by $g \mapsto g(F_0) = F_0 \circ g$ induces a surjection
\[H^*(\widetilde X_\ell; \QQ) \twoheadrightarrow H^*(\Stab_{\GL(4)}(\ell); \QQ) \cong H^*(\GL(2) \times \GL(2); \QQ) \dispunct.\]
\end{lemma}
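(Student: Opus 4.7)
The plan has two components. First, identify $\Stab_{\GL(4)}(\ell)$ up to homotopy with its Levi $\GL(2) \times \GL(2)$: choosing coordinates with $\ell = \{x_3 = x_4 = 0\}$, the stabilizer is the parabolic subgroup of block upper-triangular matrices $\begin{pmatrix} P & Q \\ 0 & R \end{pmatrix}$ with $P, R \in \GL(2, \CC)$ and $Q \in \Mat_{2 \times 2}(\CC)$, and the Levi (block-diagonal, $Q = 0$) is a deformation retract because the unipotent radical is the contractible affine space $\Mat_{2\times 2}(\CC) \cong \CC^4$. This yields $H^*(\Stab_{\GL(4)}(\ell); \QQ) \cong \Lambda(e_P, e_R, x_3, y_3)$, where $e_P, e_R \in H^1$ come from the determinant characters $\det P, \det R \from \GL(2) \to \CC^\times$ and $x_3, y_3 \in H^3$ come from the $\SL(2,\CC) \simeq S^3$ factor in each $\GL(2)$. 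The second component is to exhibit explicit classes in $H^*(\widetilde X_\ell; \QQ)$ that pull back to these four generators under the orbit map.

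For the two degree-one generators, I would use two natural characters $\widetilde X_\ell \to \CC^\times$: the cubic discriminant $F \mapsto \Delta(F)$, nonvanishing since $F$ is smooth; and the resultant $F \mapsto \Res(\partial F/\partial x_3|_\ell,\, \partial F/\partial x_4|_\ell)$ of the two normal derivatives of $F$ restricted to $\ell$, nonvanishing because the smoothness of $F$ along $\ell$ is exactly the condition that these two binary quadratic forms on $\ell$ have no common zero. Using the transformation laws, the pullback of $\Delta$ is a character $\det(g)^a$ for some $a \ne 0$, giving the class $a(e_P + e_R) \in H^1(\Stab(\ell); \QQ) = \QQ e_P \oplus \QQ e_R$; and the resultant transforms as $\det(P)^4 \det(R)^2$, giving $4 e_P + 2 e_R$. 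These two classes are linearly independent since $\det\begin{pmatrix} a & a \\ 4 & 2 \end{pmatrix} = -2a \ne 0$, so their pullbacks span $H^1(\Stab(\ell); \QQ)$.

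The main obstacle is producing the two degree-three generators $x_3, y_3$. Since these come from the $\SL(2,\CC) \simeq S^3$ factors of the Levi, they admit no obvious algebraic construction via maps to $\CC^\times$ or algebraic groups from $\widetilde X_\ell$. I would proceed by invoking the explicit cohomological description of $H^*(\widetilde X_\ell; \QQ)$ from \cref{cohomology-of-X-l} (computed via Vassiliev's simplicial resolution): identify natural degree-three generators directly from the spectral-sequence output, then verify the pullback by carefully evaluating the simplicial-resolution cocycles on the $\Stab(\ell)$-orbit. Once all four generators lie in the image of the pullback, the exterior-algebra structure of $H^*(\Stab(\ell); \QQ)$ forces surjectivity in all degrees via cup products of the degree-one and degree-three classes.
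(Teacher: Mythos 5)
Your reduction of $\Stab_{\GL(4)}(\ell)$ to the Levi $\GL(2)\times\GL(2)$ is the same first step the paper takes (it uses $\Stab_{\GL(4)}(\ell,\ell^\perp) = \GL(\ell)\times\GL(\ell^\perp)$), and your idea of hitting the degree-one generators with explicit algebraic characters $\widetilde X_\ell \to \CC^\times$ (discriminant, resultant of normal derivatives) is a reasonable variant. But the proof has a genuine gap exactly where you flag one: you do not actually construct classes in $H^3(\widetilde X_\ell;\QQ)$ mapping onto $x_3, y_3$. Saying you would ``identify natural degree-three generators from the spectral-sequence output, then verify the pullback by carefully evaluating the simplicial-resolution cocycles on the $\Stab(\ell)$-orbit'' defers the entire content of the lemma. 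The whole point of \cref{surjection} is that it supplies this verification; without it you have, at best, surjectivity in degree one, which does not force surjectivity in degree three since the degree-three generators are not cup products of degree-one classes.

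The paper's route avoids this asymmetry between degrees one and three by working throughout in the Alexander-dual picture. It identifies $H^*(\widetilde X_\ell;\QQ)$ with $\widebar H_*(\Sigma_\ell)$ and $H^*(\GL(2);\QQ)$ with $\widebar H_*(\Mat(2)\setminus\GL(2))$, where the ring generators of $H^*(\GL(2);\QQ)$ are dual to the loci of matrices whose first $i$ columns are linearly dependent ($i=1$ giving $H^3$, $i=2$ giving $H^1$). After extending the orbit map to $\Mat(2)\times\Mat(2)\to\Pi_\ell$, it suffices (citing the argument of Peters--Steenbrink, \S6) to produce four subvarieties of $\Sigma_\ell$ pulling back to rational multiples of these loci: polynomials singular at a fixed $P\in\ell$, singular somewhere on $\ell$, singular at a fixed $P'\in\ell^\perp$, and singular somewhere on $\ell^\perp$. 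The fixed-point conditions, being linear, give the codimension-two cycles (degree three); the ``somewhere'' conditions give the discriminantal hypersurfaces (degree one). This treats degrees one and three uniformly by geometry, which is exactly what your character-based approach lacks. If you want to salvage your proposal, you would need to replace the last paragraph with a concrete construction of the degree-three classes; the cleanest way is to adopt the dual cycle description as the paper does.
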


\begin{proof}
First, fix a complement $\ell^\perp$ of $\ell$ (as the notation suggests, we can pick the orthogonal complement of $\ell$).
Then $\Stab_{\GL(4)}(\ell)$ deformation retracts to $G = \Stab_{\GL(4)}(\ell, \ell^\perp)$ (the elements that fix both $\ell$ and $\ell^\perp$).
Further, 
\[G = \GL(\ell) \times \GL(\ell^\perp) \dispunct.\]

As in the computation of $H^*(\widetilde X_\ell; \QQ)$ in \cref{case-work}, it is important to identify via Alexander duality $H^*(\widetilde X_\ell; \QQ)$ with $\widebar H_*(\Sigma_\ell)$, and similarly $H^*(\GL(2); \QQ)$ with $\widebar H_*(\Mat(2) \setminus \GL(2))$, where $\Mat(2)$ is the space of all $2 \times 2$ matrices.
The generators of $H^*(\GL(2); \QQ)$ (as a ring) are represented by the locus of matrices whose first $i$ columns are linearly dependent\footnote{For $i = 1$ this means the first column is $0$.
This description of the generators generalizes to $\GL(n) \subset M(n)$.}, for $i = 1, 2$.

Fix $P \in \ell$ and $P' \in \ell^\perp$ non-zero and extend to bases of $\ell$ and $\ell^\perp$ respectively.
This identifies $\GL(\ell) \times \GL(\ell^\perp) \cong \GL(2) \times \GL(2)$.
The orbit map extends to a map 
\[\Mat(2) \times \Mat(2) \to \Pi_\ell = \widetilde X_\ell \cup \Sigma_\ell \dispunct.\]
It is enough to identify subspaces of $\Sigma_\ell$ that pull-back to (a rational multiple of) the corresponding subspaces of $\Mat(2) \times \Mat(2)$.
Then directly from arguments in \cite[section 6]{PS03}, it is enough to pick the following four subspaces of polynomials that are: (i) singular at $P$, (ii) singular at some (non-zero) point of $\ell$, (iii) singular at $P'$, (iv) singular at some (non-zero) point of $\ell^\perp$.
\end{proof}

Now we prove the analogue of \cref{main-theorem} before projectivization.
\begin{proposition}
The orbit map $\GL(4) \to \widetilde X$ and the projection $\pi \from \widetilde X \to X$ induce isomorphisms
\[H^*(X; \QQ) \isom H^*(\widetilde X; \QQ) \isom H^*(\GL(4); \QQ) \dispunct.\]
\end{proposition}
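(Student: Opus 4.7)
The plan is to deduce the orbit map isomorphism via a comparison of Leray--Serre spectral sequences over the common base $\Gr(2,4)$, and then to obtain the isomorphism $\pi^*$ by a transfer-plus-Poincaré-polynomial argument.

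First, I would observe that the orbit map $\GL(4) \to \widetilde X$ is a map of fiber bundles over $\Gr(2,4)$ (the right square of \eqref{bundle-map-orbit} together with \eqref{bundle-map-on-grassmannian}): the fiber over $\ell$ is, up to deformation retract, $\Stab_{\GL(4)}(\ell) \simeq \GL(2) \times \GL(2)$ on the domain and $\widetilde X_\ell$ on the target, and the induced map on fibers is the orbit map appearing in \cref{surjection}. Since $\Gr(2,4)$ is simply connected, the local systems on the base are trivial and both Leray--Serre spectral sequences have $E_2 = H^*(\Gr(2,4);\QQ) \otimes H^*(\text{fiber};\QQ)$. By \cref{surjection} the fiber map is surjective on rational cohomology; combined with the computation of $H^*(\widetilde X_\ell;\QQ)$ carried out in \cref{case-work} (cited at \cref{cohomology-of-X-l}), whose Poincaré polynomial agrees with that of $\GL(2) \times \GL(2)$, the fiber map is in fact an isomorphism. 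Consequently the induced map on $E_2$-pages is an isomorphism, and Zeeman's comparison theorem yields an isomorphism on $E_\infty$ and hence on $H^*$ of the total spaces. Notice that neither spectral sequence need degenerate for this argument: only the comparison matters.

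Next, I would handle $\pi^* \from H^*(X;\QQ) \to H^*(\widetilde X;\QQ)$. Injectivity is immediate from transfer applied to the $27$-sheeted cover $\pi$, so it suffices to verify that source and target have equal total rational dimension. Since $X = \CC^{20} \setminus \cV(\Delta)$, \cref{hypersurface-complement} gives $H^*(X;\QQ) \cong H^*(\CC^\times;\QQ) \otimes H^*(M;\QQ)$, and by \cref{base-cohomology} this equals $H^*(\CC^\times;\QQ) \otimes H^*(\PGL(4);\QQ)$. Because $H^2(\PGL(4);\QQ)=0$, the principal $\CC^\times$-bundle $\GL(4) \to \PGL(4)$ is rationally trivial, so $H^*(\GL(4);\QQ) \cong H^*(\CC^\times;\QQ) \otimes H^*(\PGL(4);\QQ)$ as well. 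Combining with the first step, $\dim_\QQ H^*(X;\QQ) = \dim_\QQ H^*(\GL(4);\QQ) = \dim_\QQ H^*(\widetilde X;\QQ)$, forcing the injection $\pi^*$ to be an isomorphism.

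The principal obstacle lies upstream of this proposition, in \cref{case-work}: producing the cohomology of the fiber $\widetilde X_\ell$ requires Vassiliev's simplicial resolution, and the introduction of the line $\ell$ means the stratification of $\Sigma_\ell$ must track not only the singular locus of each $F$ but also its position relative to $\ell$. Once that computation and \cref{surjection} are available, the spectral-sequence comparison and the Poincaré-polynomial count above are essentially formal.
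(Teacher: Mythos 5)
Your proof is correct and takes essentially the same approach as the paper's for the key isomorphism $H^*(\widetilde X;\QQ) \isom H^*(\GL(4);\QQ)$: compare the two fibrations over the simply connected base $\Gr(2,4)$, upgrade the surjection of \cref{surjection} to an isomorphism on fibers using \cref{cohomology-of-X-l}, and conclude by naturality of the Serre spectral sequence (your Zeeman comparison is the same argument packaged). The only difference is that you spell out the isomorphism $\pi^*$ via transfer plus a Poincaré-polynomial count, whereas the paper leaves it implicit — it follows at once from the fact that the orbit map $\GL(4) \to X$ (itself an isomorphism on cohomology by \cref{base-cohomology} and \cref{hypersurface-complement}) factors through $\pi$; both routes are sound.
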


\begin{proof}
Note that by \cref{cohomology-of-X-l},
\[H^*(\widetilde X_\ell; \QQ) \cong H^*( \GL(2) \times \GL(2); \QQ) \cong H^*(\Stab_{\GL(4)}(\ell); \QQ) \dispunct.\]
Since the orbit map $\Stab_{\GL(4)}(\ell) \to \widetilde X_\ell$ induces a surjection on $H^*(\_; \QQ)$ by \cref{surjection}, the induced map must be an isomorphism.

Thus we have a map of bundles (as in \eqref{bundle-map-on-grassmannian})
\[\begin{tikzcd}
\Stab_{\GL(4)}(\ell) \arrow[rd, hook] \arrow[r] &  \widetilde X_\ell \arrow[rd, hook] &  \\
 & \GL(4) \arrow[dd] \arrow[r] &  \widetilde X \arrow[dd] \\ \\
 & \Gr(2,4) \arrow[r, equal] & \Gr(2,4) 
\end{tikzcd}\]
that fiberwise induces an isomorphism 
\[H^*(\widetilde X_\ell; \QQ) \isom H^*(\Stab_{\GL(4)}(\ell); \QQ) \dispunct.\]
There is no monodromy in either bundle since $\Gr(2,4)$ is simply connected.
Therefore from naturality of the Serre spectral sequence, the map $\GL(4) \to \widetilde X$ must also be an isomorphism on cohomology.
\end{proof}

Converting this to a proof of \cref{main-theorem} is fairly simple.
We restate the theorem here for convenience.
\maintheorem*
\begin{proof}[Proof of \cref{main-theorem}]\label{main-theorem-proof}
We have another map of bundles (as in \eqref{bundle-map-orbit}):
\[\begin{tikzcd}
\CC^\times \arrow[rd, hook] \arrow[r,"z \mapsto z^3"] & \CC^\times \arrow[rd, hook] \\ 
& \GL(4) \arrow[r] \arrow[d] & \widetilde X  \arrow[d] \\
& \PGL(4) \arrow[r] & \widetilde M 
\end{tikzcd}\]
By \cref{hypersurface-complement}, both of these bundles satisfy the Leray--Hirsch theorem and the fiberwise map $\CC^\times \to \CC^\times$ is degree $3$, so induces an isomorphism on $H^*(\CC^\times; \QQ)$.
Thus the map of bases $\PGL(4) \to \widetilde M$ must also induce an isomorphism 
\[H^*(\widetilde M; \QQ) \isom H^*(\PGL(4); \QQ) \dispunct. \qedhere\]
\end{proof}

\section{Rational cohomology of \texorpdfstring{$\widetilde X_{\ell}$}{X-tilde(l)}}
\label{case-work}


\subsection{Definitions and plan of attack}\label{plan}

We will suppress constant rational coefficients throughout this section, and use $\widebar H$ to denote Borel--Moore homology (also with rational coefficients by default).
Note that for an orientable but not necessarily compact $2n$-manifold $M$, Poincaré duality takes the form
\[\widebar H_i(M) \cong H^{2n-i}(M) \cong (H_{2n-i}(M))^\vee \cong (H^{i}_c(M))^\vee \dispunct.\]

We use the spectral sequence developed by Vassiliev in \cite{Vassiliev99}.
We refer the reader to Vassiliev's paper for the theory, but summarize how the computation works in practice.
Recall that $\widetilde X_\ell \subset \Pi_\ell \cong \CC^{16}$, and set $\Sigma_\ell = \Pi_\ell \setminus \widetilde X_\ell = \Pi_\ell \cap \Sigma$, the set of singular cubic polynomials that vanish on the line $\ell$.
Then via Alexander duality, 
\begin{equation}\label{alexander-duality}
\widetilde H^i(\widetilde X_\ell) = \widebar H_{31 - i}(\Sigma_\ell) \dispunct.
\end{equation}
Note that $\Sigma_\ell$ is a hypersurface in $\Pi_\ell$, being the vanishing locus of $\Delta_\ell = \Delta|_{\Pi_\ell}$.

\begin{remark}\label{andreotti--frankel}
The complex variety $\widetilde X_\ell$, being the complement of a hypersurface, is affine and hence a $16$-dimensional Stein manifold.
Thus by the Andreotti--Frankel theorem, $H^i(\widetilde X_\ell) = 0$ for $i > 16$.
This along with \cref{alexander-duality} imply that $\widebar H_{i}(\Sigma_\ell)$ can only be non-zero for $15 \le i \le 31$.
\end{remark}

Let $F \in \Sigma_\ell$ be a singular cubic polynomial and let $K$ be its singular locus.
Then $K$, as a subset of $\PP^3$, can be one of the following $11$ types (see~\cite[Proposition 8]{Vassiliev99}):
\begin{singularity}
\item a point; \label{point}
\item two distinct points; \label{two-points}
\item a line;
\item three points, not on a line; \label{three-points}
\item a smooth conic contained in a plane $\PP^2 \subset \PP^3$;
\item a pair of intersecting lines;
\item four points, not on a plane;\label{four-points}
\item a plane;
\item three lines through a point, not all on the same plane;
\item a smooth conic contained in a plane along with another point not on that plane;
\item all of $\PP^3$ \label{everything}.
\end{singularity}

These further break up as \emph{subtypes} depending on their configuration with respect to $\ell$.
For most of the types, how they break up will not be relevant to us; we list those that will.
We list names for the points for convenience, they are still to be thought of as a priori \emph{unordered} sets of points: $\{P,Q\} = \{Q,P\}$ and so on.
\begin{singularity}
\item[(\ref{point})] a point $P$
\begin{singularity}[ref=\ref*{point}\alph*]
\item $P \in \ell$\label{one-on}
\item $P \notin \ell$\label{one-off}
\end{singularity}
\item[(\ref{two-points})] two points $P$, $Q$
\begin{singularity}[ref=\ref*{two-points}\alph*]
\item $P, Q \in \ell$\label{two-on}
\item $P \in \ell$, $Q \notin \ell$ \label{one-on-one-off}
\item $P, Q \notin \ell$, $P$ and $Q$ coplanar with $\ell$\label{two-coplanar}
\item $P, Q \notin \ell$, $P$ and $Q$ not coplanar with $\ell$ \label{two-not-coplanar}
\end{singularity}
\item[(\ref{three-points})] three points $P$, $Q$, $R$, not collinear
\begin{singularity}[ref=\ref*{three-points}\alph*]
\item $P, Q \in \ell$, $R \notin \ell$\label{two-on-one-off}
\item $P \in \ell$, $Q, R \notin \ell$, $Q$, $R$ coplanar with $\ell$\label{one-on-two-coplanar}
\item $P \in \ell$, $Q, R \notin \ell$, $Q$, $R$ not coplanar with $\ell$\label{one-on-two-not-coplanar}
\item $P, Q, R \notin \ell$, $P$, $Q$, $R$ and $\ell$ all coplanar\label{three-coplanar}
\item $P, Q, R \notin \ell$, $P$, $Q$ and $\ell$ coplanar, $R$ not on that plane\label{two-coplanar-one-off}
\item $P, Q, R \notin \ell$, no two coplanar with $\ell$\label{three-no-two-coplanar}
\end{singularity}
\item[(\ref{four-points})] four points $P$, $Q$, $R$, $S$, not coplanar
\begin{singularity}[ref=\ref*{four-points}\alph*]
\item $P, Q \in \ell$, $R, S \notin \ell$\label{two-on-two-off}
\item $P \in \ell$, $Q, R, S \notin \ell$, $Q$, $R$, $\ell$ coplanar\label{one-on-two-coplanar-one-off}
\item $P \in \ell$, no two of $Q$, $R$, $S$ coplanar with $\ell$\label{one-on-no-two-coplanar}
\item $P, Q, R, S \notin \ell$, $P$, $Q$, $R$ coplanar with $\ell$, but $S$ not on that plane\label{three-coplanar-one-off}
\item $P, Q, R, S \notin \ell$, $P$, $Q$ and $\ell$ coplanar, $R$, $S$ and $\ell$ coplanar\label{two-coplanar-pairs}
\item $P, Q, R, S \notin \ell$, $P$, $Q$ and $\ell$ coplanar, no other pair coplanar with $\ell$\label{two-coplanar-two-off}
\item $P, Q, R, S \notin \ell$, no two coplanar with $\ell$\label{four-no-two-coplanar}
\end{singularity}
\end{singularity}

\begin{remark}
The types correspond to orbits of the singular loci under the $\PGL(4)$ action on $\PP^3$ and the subtypes correspond to orbits under $\Stab(\ell) \subset \PGL(4)$, but this will not be explicitly important for us.
\end{remark}

\begin{definition}
For a manifold $M$ and natural number $n$, the \emph{ordered configuration space} of $n$ points on $M$ is given by 
\[\PConf_n(M) \vcentcolon= \set*{(a_1,\dots,a_n) \in M^n}{a_i \ne a_j \text{ for } i \ne j} \dispunct.\]
This space comes with a natural action of the symmetric group $\fS_n$ by permuting the coordinates and the quotient is the \emph{unordered configuration space} $\UConf_n(M)$ of $n$ points on $M$.
\end{definition}
\begin{definition}
For any $A \subseteq \UConf_n(M)$, the \emph{sign local coefficients} on $A$, denoted by $\pm \QQ$, is given by the composition
\[\pi_1(A) \to \pi_1(\UConf_n(M)) \to \fS_n \to \{\pm 1\} \subset \QQ^\times\]
thought of as a representation on $\QQ$.
Explicitly, a loop in $A$ acts on $\QQ$ by the sign of the induced permutation on the $n$ points.
\end{definition}

{The method of simplicial resolution produces for us a space $\sigma$ with a map $f \from \sigma \to \Sigma_\ell$ with the following properties:
\makeatletter
\@beginparpenalty=10000
\makeatother
\begin{enumerate}[label=(\arabic*)]
\item \label{BM-isomorphism}
The map $f_* \from \widebar H_*(\sigma) \to \widebar H_*(\Sigma_\ell)$ is an isomorphism.

\item \label{stratification}
The space $\sigma$ has a stratification 
\[\sigma = \bigcup_i F_i \dispunct, \]
where $i$ varies over all the \emph{subtypes} (not just the ones listed, but all of them).
That is, $F_i$ is a stratum corresponding to the subtype $i$.
The strata are (partially) ordered by degeneracy: $F_i$ intersects $\widebar{F_j}$ only if polynomials with singularity of subtype~$i$ can degenerate to a polynomial with singularity of subtype~$j$.

\item \label{double-bundle}
Let 
\[A_i = \{\text{singular sets $K$ of subtype~$i$}\}\]
and $K \in A_i$.
Let $L(K)$ be the linear subspace of $\Pi_\ell$ consisting of polynomials that are singular on $K$ and possibly elsewhere.
Then there are spaces $\Phi_i$ and $\Lambda(K)$ along with fiber bundles:
\[
  \begin{tikzcd}[column sep=tiny]
    L(K) \ar[hookrightarrow]{rr} & & F_i \dar[->>] \\ 
    \Lambda(K) \ar[hookrightarrow]{rr} & &\Phi_i \dar[->>] \\
 & K \rar[draw=none][description]{\in} & A_i
  \end{tikzcd}
\]

\item \label{monodromy}
The space $\Lambda(K)$ is an \emph{open cone} with vertex representing $K$ and captures the combinatorics and topology of the subsets of $K$ that can appear as singular sets of other polynomials in $\Sigma_\ell$.
The homeomorphism type of $ \Lambda(K)$ depends only on the \emph{type} of $K$ and not its subtype.
Further, $\widebar H_* ( \Lambda(K)) = 0$ unless $K$ is of type \ref{point}, \ref{two-points}, \ref{three-points}, \ref{four-points} or \ref{everything}.
For $K$ of type \ref{point}, \ref{two-points}, \ref{three-points} and \ref{four-points} respectively, i.e.\ when $K$ is a finite set of points, $ \Lambda(K)$ can be identified with the open simplex with vertex set $K$.
In particular, setting $n = \# K$, 
\[\widebar H_*(\Lambda(K)) = 0 \text{ for } * \ne n-1\dispunct,\]
and 
\[\widebar H_{n-1} (\Lambda(K)) \cong \QQ \dispunct,\] 
generated by the \emph{fundamental class}\footnote{Recall that the fundamental class of an orientable but not necessarily compact $n$-manifold $M$ without boundary is a generator of $\widebar H_{n} (M)$, and the choice of the generator corresponds to the choice of an orientation on $M$.} representing an orientation on the simplex $\Lambda(K) \cong B^{n-1}$.
Further, $A_i$ is a subset of $\UConf_n(\PP^3)$, and the monodromy on $\widebar H_* (\Lambda (K))$ is given by $\pm \QQ$ (permuting the points of $K$ changes the orientation of the simplex by the sign of the permutation).

\item \label{big-case}
For the type \ref{everything} (note that \ref{everything} has only one subtype, itself), $A_{\type{everything}}$ is singleton, the only element being $K = \PP^3$.
The only polynomial singular on $K$ is $0$, so $L(K) = \{0\}$.
Thus $F_{\type{everything}} = \Phi_{\type{everything}} =  \Lambda(\PP^3)$.
Further, the space $\Phi_{\type{everything}} = \Lambda(\PP^3)$ is the open cone over $\bigcup_{j \ne \type{everything}} \Phi_j$ for certain gluings.
\end{enumerate}
}

\begin{example}\label{parameter-space-example}
For the subtype~\ref{one-on-one-off}, a point on $\ell$ and a point not on $\ell$, we have $A_{\type{one-on-one-off}} = \ell \times \PP^3 \setminus \ell$.
For the subtype~\ref{two-not-coplanar}, two points not coplanar with $\ell$, the space $A_{\type{two-not-coplanar}}$ is an open set in $\UConf_2(\PP^3 \setminus \ell)$.
\end{example}

We refer the reader to \cite{Vassiliev99} for details of the construction and proofs for (1)--(5).
Everything we use for our computation has been summarized in these properties.
We now go through the steps of the computation before digging into the details.

By the isomorphism given by Alexander duality (\cref{alexander-duality}), we are reduced to computing $\widebar H_*(\Sigma_\ell)$.
By \ref{BM-isomorphism}, this is the same as $\widebar H_*(\sigma)$.
Let 
\[\deg(i) = 14 - \dim L(K)\]
for any $K \in A_i$.
This is monotonic on the poset described in \ref{stratification}, in the sense that if $F_i$ intersects $\widebar{F_j}$, then $\deg(i) \le \deg(j)$.
Using the filtration of $\sigma$ given by $\bigcup_{\deg(i) \le p} F_i$ there is a spectral sequence $E^r_{p,q} \implies \widebar H_{p+q} \sigma$, with the $E^1$ page given by
\begin{equation}\label{ss-terms}
E^1_{p,q} = \dSum_{\deg(i) = p} \widebar H_{p+q}(F_i) \dispunct.
\end{equation}

To compute each term, since $L(K) \to F_i \to \Phi_i$ is a complex vector bundle, we have the Thom isomorphism
\begin{equation}\label{thom-isomorphism}
\widebar H_*(F_i) = \widebar H_{* -2 \dim_\CC L(K)} (\Phi_i) \dispunct.
\end{equation}
For the right-hand side, if $ \Lambda(K)$ is acyclic then so must be $\Phi_i$, so this automatically vanishes unless $i$ is a subtype of \ref{point}, \ref{two-points}, \ref{three-points}, \ref{four-points}, or \ref{everything}.

For the (sub)type \ref{everything}, from \ref{big-case} we have that $\Phi_{\type{everything}} =  C Z$, the open cone on $Z$, where 
\[Z = \bigcup_{i \ne \type{everything}} \Phi_i \dispunct.\]
So we get a spectral sequence $e^r_{p,q} \implies H_{p+q} (Z)$ with
\[e^1_{p,q} = \dSum_{\substack{\deg(j) = p,\\ j \ne \type{everything}}} \widebar H_{p+q}(\Phi_j) \dispunct.\]
But then we also have
\[\widebar H_*( CZ) = H_*(CZ , Z) = \widetilde H_{*-1}(Z) \dispunct.\]

For all the other $i$, the set $K$ is finite, of say $n$ points ($1 \le n \le 4$).
Then as described in \ref{monodromy}, $\widebar H_*( \Lambda(K))$ is concentrated in degree $n-1$, so 
\begin{equation}\label{local-coefficient-isomorphism}
\widebar H_*(\Phi_i) = \widebar H_{* - n + 1}(A_i; \pm \QQ) = H^{2\dim_\CC A_i + n - 1 - *}(A_i; \pm \QQ)   \dispunct,
\end{equation}
where the latter isomorphism is by (twisted) Poincaré duality, since the $A_i$ are complex manifolds.
So the computation eventually boils down to computing $H^*(A_i; \pm \QQ)$ for these $i$ (see~\cref{vanishing-cohomology,shape-of-page-1}), bookkeeping, and then relatively standard arguments involving spectral sequences following \cite{Vassiliev99} (see~\cref{cohomology-of-X-l}).

Before we start on the detailed casework, it is worth describing representatives of (multiplicative) generators of $H^*(\widetilde X_\ell)$.
From \cref{surjection}, we know that $H^*(\widetilde X_\ell)$ is generated in degrees $1$ and $3$.
Tracing through all of the algebra above and the degeneration at $E^1_{p,q}$ as described in \cref{cohomology-of-X-l}, we have isomorphisms:
\begin{align*}
H^1(\widetilde X_\ell) \cong \widebar H_{30}(\Sigma_\ell) &\cong H^0(A_{\type{one-on}}) \dsum H^0(A_{\type{one-off}})\\
H^3(\widetilde X_\ell) \cong \widebar H_{28}(\Sigma_\ell) & \cong H^2(A_{\type{one-on}}) \dsum H^2(A_{\type{one-off}})
\end{align*}
Note that $A_{\type{one-on}} = \ell$ and $A_{\type{one-off}} = \PP^3 \setminus \ell$ (which by \cref{hyperspace-complement} deformation retracts to $\ell^\perp$).
For a more geometric description, consistent with the proof of \cref{surjection}, we can find representative subspaces of $\Sigma_\ell$, after fixing $P \in \ell$, and $P' \in \ell^\perp$.
Again, tracing through the chain of isomorphisms above, the subspaces corresponding to (i)~$H^0(A_{\type{one-on}})$, (ii)~$H^2(A_{\type{one-on}})$, (iii)~$H^0(A_{\type{one-off}})$ and (iv)~$H^2(A_{\type{one-off}})$ are (i)~polynomials singular at~$P$, (ii)~polynomials singular at some point of~$\ell$, (iii)~polynomials singular at~$P'$ and (iv)~polynomials singular at some point of~$\ell'$.

\begin{remark}
In this entire computation, we could keep track of the mixed Hodge structures throughout, as in \cite{Tommasi05,Tommasi14} (see~also~\cite{Gorinov05}), but this ends up being unnecessary for our purposes.
This information can in any case be recovered a posteriori given \cref{surjection,cohomology-of-X-l}, since the orbit map is algebraic.
\end{remark}

\subsection{General results on configuration spaces of projective space}

We now state the results of some general computations that we will use in the case work, since the arguments needed are fairly independent.

\begin{lemma}\label{hyperspace-complement}
Let $H \cong \PP^k$ be a $k$-dimensional linear subspace of $\PP^n$ for some $0 \le k \le n$ and let $H^\perp$ be the (projectivized) orthogonal complement of $H$.
Then $\PP^n \setminus H$ deformation retracts to $H^\perp \cong \PP^{n-k-1}$.
\end{lemma}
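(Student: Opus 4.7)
The plan is to exhibit an explicit straight-line homotopy in homogeneous coordinates after choosing an adapted basis. First I would fix coordinates on $\PP^n$ so that $H$ is cut out by $x_{k+1} = \cdots = x_n = 0$ and $H^\perp$ is cut out by $x_0 = \cdots = x_k = 0$. Then every point of $\PP^n \setminus H$ has a representative $[x_0 : \cdots : x_n]$ with at least one of $x_{k+1},\ldots,x_n$ nonzero. The projection
\[r \from \PP^n \setminus H \to H^\perp\dispunct, \qquad [x_0 : \cdots : x_n] \mapsto [0 : \cdots : 0 : x_{k+1} : \cdots : x_n]\]
is well-defined because the target tuple is never identically zero, and it is continuous since it is induced from a linear map on the locus where it is defined.

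Next I would define the straight-line homotopy
\[h \from (\PP^n \setminus H) \times [0,1] \to \PP^n \setminus H\dispunct, \qquad h_t([x_0 : \cdots : x_n]) = [(1-t)x_0 : \cdots : (1-t)x_k : x_{k+1} : \cdots : x_n]\dispunct.\]
At each time $t$ the last $n-k$ coordinates are unchanged, so at least one is nonzero, and the image lies in $\PP^n \setminus H$. Well-definedness on $\PP^n$ (i.e.\ independence of the representative) is immediate because rescaling all $x_i$ by $\lambda$ rescales the whole tuple by $\lambda$. We have $h_0 = \mathrm{id}$, $h_1 = r$ (viewed as a map to $\PP^n \setminus H$), and for points in $H^\perp$ the first $k+1$ coordinates are already zero so $h_t$ is constant in $t$. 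This exhibits $r$ as a strong deformation retraction.

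Finally, I would note that $H^\perp$ is by construction a linear subspace cut out by $k+1$ linear equations in $\PP^n$, hence isomorphic to $\PP^{n-k-1}$. There is no real obstacle here: the only things to check are that the formula for $h_t$ never lands in $H$ (which is clear from the coordinate choice) and that $h$ descends from an affine-linear homotopy on representatives to a well-defined continuous map on projective space (which it does, being homogeneous of degree $1$ in the $x_i$).
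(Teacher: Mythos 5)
Your proof is correct and uses essentially the same argument as the paper: the same choice of adapted coordinates and the same linear scaling homotopy (the paper's parameter runs the opposite way, $t$ instead of $1-t$, but this is cosmetic). You spell out the well-definedness and "never lands in $H$" checks more explicitly, which is fine but not a different approach.
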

\begin{proof}
Let $\PP^n = \{[x_0: \dots : x_n]\}$ and without loss of generality, $H = \{[x_0: \dots : x_k]\}$.
Then 
\[(t,[x_0: \dots: x_k: x_{k+1} : \dots : x_n]) \mapsto [tx_0: tx_1: \dots : tx_k: x_{k+1} : \dots : x_n]\]
is an explicit deformation retract.
\end{proof}

\begin{lemma}\label{totaro-computations}
\[H^*(\UConf_2(\CC); \pm \QQ) \cong H^*(\UConf_2(\CC^2),\pm \QQ) \cong H^*(\UConf_4(\CC^2); \pm \QQ) = 0 \dispunct.\]
\[H^*(\UConf_4(\PP^2 \setminus \{\bullet\}); \pm \QQ) \cong H^*(\UConf_4(\PP^3 \setminus \PP^1); \pm \QQ) = 0\]
\[H^*(\UConf_2(\PP^1); \pm \QQ) \cong H^*(\UConf_2(\PP^3 \setminus \PP^1); \pm \QQ) \cong \begin{cases*}
\QQ & if $* = 2$\\
0 &otherwise.\end{cases*}\]
\end{lemma}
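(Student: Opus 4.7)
The plan is to exploit in every case the identification
\[H^*(\UConf_n(M); \pm\QQ) \cong H^*(\PConf_n(M); \QQ)^{\sgn}\]
coming from the $\fS_n$-cover $\PConf_n(M) \to \UConf_n(M)$: once $H^*(\PConf_n(M); \QQ)$ is known as an $\fS_n$-representation, extract the $\sgn$-isotypic component by a direct character computation.

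For the $n=2$ cases over affine space, I would use the coordinate change $(a,b) \mapsto (a+b, a-b)$ to identify $\PConf_2(\CC^k) \cong \CC^k \times (\CC^k \setminus \{0\}) \simeq S^{2k-1}$, under which the transposition becomes the antipodal involution on $S^{2k-1}$. Since the antipodal map on an odd sphere has degree $(-1)^{2k} = +1$, it acts trivially on $H^*(S^{2k-1}; \QQ)$, so the sign-isotypic subspace vanishes. This handles both $\UConf_2(\CC)$ and $\UConf_2(\CC^2)$.

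For $\UConf_2(\PP^1)$, I would compute $H^*(\PConf_2(\PP^1); \QQ)$ by Gysin on the diagonal $\Delta \hookrightarrow \PP^1 \times \PP^1$, using $[\Delta] = \alpha + \beta \in H^2(\PP^1 \times \PP^1) = \QQ\langle \alpha, \beta\rangle$. The result is $\QQ$ in degrees $0$ and $2$, with the degree-$2$ class represented by $[\alpha] \equiv -[\beta]$ modulo $\alpha+\beta$; the swap $\alpha \leftrightarrow \beta$ fixes $H^0$ and acts as $-1$ on $H^2$, giving $\QQ$ in degree $2$ on the sign-isotypic side. For $\UConf_2(\PP^3 \setminus \PP^1)$, the same method applies to $(\PP^3 \setminus \PP^1)^2$, which by \cref{hyperspace-complement} is an affine $\CC^4$-bundle over $(\PP^1)^2$ and hence has the rational cohomology of $(\PP^1)^2$. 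The diagonal has complex codimension $3$, so the swap acts on its normal bundle as $-\mathrm{id}_{\CC^3}$; this preserves the orientation of the underlying real rank-$6$ bundle (determinant $(-1)^6 = +1$), so the Thom isomorphism is $\fS_2$-equivariant for the naive action on $H^*(\Delta)$. A short Gysin bookkeeping then shows that only $[\alpha - \beta] \in H^2((\PP^1)^2) \hookrightarrow H^2(\PConf_2(\PP^3 \setminus \PP^1))$ survives in the sign-isotypic part.

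For the $n=4$ cases I would appeal to Arnold's presentation: $H^*(\PConf_4(\CC^2); \QQ)$ is generated by symmetric classes $a_{ij} = a_{ji}$ in degree $3$ (symmetric because the antipodal on $S^3$ has degree $+1$) subject to the Arnold relations, with Poincaré polynomial $(1+t^3)(1+2t^3)(1+3t^3) = 1 + 6t^3 + 11t^6 + 6t^9$. I would compute the $\fS_4$-character of each graded piece and take inner products with $\chi_\sgn$ (for instance, $\chi_{H^3} = (6, 2, 2, 0, 0)$ on the five conjugacy classes gives $\langle \chi_{H^3}, \chi_\sgn\rangle = 0$), verifying no sign summand in any degree. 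For $\UConf_4(\PP^2 \setminus \{\bullet\})$ and $\UConf_4(\PP^3 \setminus \PP^1)$, I would use the affine-bundle projections to $\PP^1$ and stratify $\PConf_4$ over $\Sym^4(\PP^1)$ by the partition of $\{1,2,3,4\}$ recording which points share an image in $\PP^1$; on each stratum, the twisted cohomology factors as an $\fS_4$-equivariant product of $H^*(\UConf_{\bullet}(\CC^m); \pm\QQ)$ (for $m=1$ or $m=2$, both already shown to vanish) over some $\UConf_k(\PP^1)$-base, forcing the total sign part to vanish. The main obstacle is the $n=4$ bookkeeping: checking that every graded piece of $H^*(\PConf_4(\CC^2))$, in particular the $11$-dimensional degree-$6$ piece, contains no $\sgn$-summand, and carefully propagating the $\fS_4$-action through the stratification for the two projective cases.
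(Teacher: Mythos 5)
Your argument for $\UConf_2(\CC)$ and $\UConf_2(\CC^2)$ is the one in the paper. For the remaining cases the paper is more uniform: it simply invokes Totaro's spectral sequence \cite{Totaro96} to obtain $H^*(\PConf_n(Z);\QQ)$ as an $\fS_n$-representation and then extracts the sign-isotypic summand by transfer (citing Vassiliev's Lemma~2B as an alternative for $\UConf_2(\PP^1)$), whereas you substitute Gysin-sequence and Arnold-algebra arguments. Your Gysin treatment of $\UConf_2(\PP^1)$ and $\UConf_2(\PP^3\setminus\PP^1)$ is sound, and the character computation for $\PConf_4(\CC^2)$ does close once carried out in all degrees: on the conjugacy classes $(e,(12),(12)(34),(123),(1234))$ the degree-$6$ piece has character $(11,1,-1,-1,-1)$ and the degree-$9$ piece has character $(6,0,-2,0,0)$, and neither pairs nontrivially with $\sgn$.

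The genuine gap is in the stratification argument for $\UConf_4(\PP^2\setminus\{\bullet\})$ and $\UConf_4(\PP^3\setminus\PP^1)$. On the open stratum where all four points have distinct images in $\PP^1$, the fiber over a point of $\UConf_4(\PP^1)$ is a contractible product of affine fibers (each a $\UConf_1(\CC^m)$), so the fiberwise vanishing you invoke gives nothing, and the sign-twisted cohomology of this stratum is precisely $H^*(\UConf_4(\PP^1);\pm\QQ)$. This space is not on your list and is not of the form $\UConf_n(\CC^m)$, so neither the antipodal trick nor the Arnold-algebra computation applies; a separate argument is required. (It does vanish: the $\fS_4$-equivariant principal fibration $\PGL(2,\CC)\to\PConf_4(\PP^1)\to\PP^1\setminus\{0,1,\infty\}$ has degenerate Leray spectral sequence with trivial $\fS_4$-action on $H^*$ of the fiber, and the $\fS_4$-action on $H^*(\PP^1\setminus\{0,1,\infty\})$ factors through the quotient by the Klein four-group, so only the trivial and two-dimensional irreducible constituents of $\fS_4$ occur. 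But this step is missing from your write-up.) Less seriously, the strata of partition types $(3,1)$ and $(4)$ require $H^*(\UConf_3(\CC^m);\pm\QQ)=0$ and $H^*(\UConf_4(\CC);\pm\QQ)=0$, neither of which you established; these follow by arguments parallel to your $\UConf_4(\CC^2)$ case but need to be stated.
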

\begin{proof}
For $\UConf_2(\CC)$ or $\UConf_2(\CC^2)$, we can use that $\PConf_2(\RR^{2n}) \simeq S^{2n-1}$, and the $\fS_2$ action is by the antipodal map, which is degree $1$ and hence by transfer $H^*(\UConf_2(\RR^{2n}); \pm \QQ) = 0$.

For all the other spaces of the form $\UConf_n(Z)$, \cite{Totaro96} provides spectral sequences that converge to $\PConf_n(Z)$ as an $\fS_n$ representation.
The computation of each of these is straightforward from \cite[Theorem 1]{Totaro96}.
The conclusion again follows from transfer, since $H^*(\UConf_n(Z); \pm \QQ)$ is the $\pm \QQ$ summand of $H^*(\PConf_n(Z); \QQ)$ as a $\fS_n$ representation.

For $H^*(\UConf_2(\PP^1); \pm \QQ)$ we can also use \cite[Lemma~2B]{Vassiliev99}.
\end{proof}

\subsection{Case work}

This section contains the details of the arguments to compute the various $H^*(A_i; \pm \QQ)$.
The main idea is decomposing these spaces as fiber bundles, where both the fiber and base are simpler.
In many instances the bases are $A_j$ for some lower $j$, and the computation is `inductive' or recursive.
First we establish the cases where the answer is $0$, the recursive nature of the argument makes some of the cases relatively easy.
The cases that are exceptions in the proposition below are treated in \cref{shape-of-page-1}.

\begin{proposition}\label{vanishing-cohomology}
Suppose that $i$ is a subtype of \ref{point}, \ref{two-points}, \ref{three-points} or \ref{four-points}.
Then $H^*(A_i; \pm \QQ) = 0$ unless $i$ is one of \ref{one-on}, \ref{one-off}, \ref{two-on}, \ref{one-on-one-off}, \ref{two-not-coplanar}, \ref{two-on-one-off}, \ref{one-on-two-not-coplanar}, \ref{two-on-two-off}, \ref{everything}.
\end{proposition}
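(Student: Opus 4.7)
The plan is to handle each subtype $i$ in the vanishing list by exhibiting $A_i$ as the total space of an $\fS_n$-equivariant fiber bundle whose twisted cohomology vanishes, then applying the Leray--Serre spectral sequence with sign coefficients. The central geometric tool is the projection $p \from \PP^3 \setminus \ell \to \PP^1$ sending a point $Q$ to the unique plane through $\ell$ and $Q$; this is a Zariski-locally-trivial $\CC^2$-bundle, and two points of $\PP^3 \setminus \ell$ are coplanar with $\ell$ precisely when they share a $p$-fiber. Consequently, the combinatorial data of a subtype --- which points lie on $\ell$, which share a plane through $\ell$ --- is equivalent to specifying how the off-$\ell$ points are distributed among the fibers of $p$.

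With this dictionary in hand, each $A_i$ fibers over a product whose factors are copies of $\ell$ (for the on-$\ell$ points) and a generic configuration space on $\PP^1$ (recording which planes through $\ell$ are occupied), with fiber a product of factors of the form $\UConf_k(\CC^2)$ restricted to a suitable generality locus. Whenever some factor of the fiber is $\UConf_2(\CC^2)$ or $\UConf_4(\CC^2)$, \cref{totaro-computations} forces its $\pm\QQ$ cohomology to vanish; since the $\fS_n$-sign on $A_i$ restricts to the sign on that factor, the Serre spectral sequence forces $H^*(A_i;\pm\QQ)=0$. This dispatches the subtypes \ref{two-coplanar}, \ref{one-on-two-coplanar}, \ref{two-coplanar-one-off}, \ref{one-on-two-coplanar-one-off}, \ref{two-coplanar-pairs}, and \ref{two-coplanar-two-off}.

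The remaining subtypes \ref{three-coplanar}, \ref{three-no-two-coplanar}, \ref{one-on-no-two-coplanar}, \ref{three-coplanar-one-off}, and \ref{four-no-two-coplanar} reduce to two auxiliary vanishings not directly supplied by \cref{totaro-computations}. First, on the open subset of $\UConf_3(\CC^2)$ consisting of non-collinear triples, I would use the simply transitive action of the affine group $\CC^2 \rtimes \GL(2,\CC)$ on ordered non-collinear triples to identify the corresponding $\PConf_3$ with this group; since the group is connected, the $\fS_3$-action on its rational cohomology is trivial, so the sign-isotypic piece is zero. Second, for $H^*(\UConf_n(\PP^1);\pm\QQ)=0$ with $n=3,4$, I would use $\PConf_3(\PP^1) \cong \PGL(2,\CC)$ and $\PConf_4(\PP^1) \cong \PGL(2,\CC) \times M_{0,4}$ where $M_{0,4} = \PP^1 \setminus \{0,1,\infty\}$; the $\fS_n$-action on $H^*(\PGL(2,\CC);\QQ)$ is trivial by connectedness (the action on each $\PGL(2,\CC)$-torsor fiber is by group translations), and for $n=4$ the sign character of $\fS_4$ factors through $\fS_4/V_4 \cong \fS_3$ as the sign of $\fS_3$, whose isotypic part in the standard $\fS_3$-representation $H^1(M_{0,4};\QQ)$ vanishes.

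The main obstacle is not any single computation but the bookkeeping: for each subtype, identifying the correct fibration, checking that its fiber has the advertised form, and tracking how the sign rep restricts across the layers of fibrations so that Serre applies with the right local system. The grouping-by-$p$-fiber framework above, together with the two auxiliary vanishings and \cref{totaro-computations}, reduces every case to a routine spectral-sequence argument.
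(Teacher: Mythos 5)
Your organizing projection to the $\PP^1$ of planes through $\ell$ systematizes the paper's casework (the paper uses exactly this projection, as $\rho$, for subtype \ref{two-on-two-off}), and your two auxiliary vanishings are correct and worth noting. The $\CC^2\rtimes\GL(2,\CC)$-torsor argument is a cleaner version of the paper's fibration-and-generator-check for \ref{three-coplanar}, and the $\PGL(2,\CC)$-torsor and $\PP^1\setminus\{0,1,\infty\}$ arguments for $\UConf_3(\PP^1)$ and $\UConf_4(\PP^1)$ do not appear in the paper, which instead handles \ref{three-no-two-coplanar} by a tower of fibrations on an $\fS_3$-cover and \ref{four-no-two-coplanar} by stratifying $\UConf_4(\PP^3\setminus\ell)$ and invoking \cref{totaro-computations}.

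The gap is in passing off the remaining work as routine. Because your framework puts the on-$\ell$ points and the occupied planes in the base, the fiber is in most cases \emph{not} a literal product $\prod_j\UConf_{k_j}(\CC^2)$: the non-collinearity constraint of type \ref{three-points} and the non-coplanarity constraint of type \ref{four-points} remove a positive-codimension subvariety that couples the factors and can couple the fiber to the base. For \ref{two-coplanar-pairs} the fiber over a pair of planes is $\UConf_2(\CC^2)\times\UConf_2(\CC^2)$ minus the codimension-one locus where $\gen{P,Q}\cap\ell=\gen{R,S}\cap\ell$; the vanishing $H^*(\UConf_2(\CC^2);\pm\QQ)=0$ does not pass to that open subset without a Gysin sequence plus a separate computation for the removed locus. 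For \ref{one-on-two-coplanar}, with $P$ placed in the $\ell$-factor of the base, the fiber is $\UConf_2(\CC^2)$ minus the locus where $\gen{Q,R}$ hits $P$, and again you need Gysin. For \ref{three-no-two-coplanar} and \ref{four-no-two-coplanar} the fiber is $(\CC^2)^n$ minus the collinear or coplanar locus, whose cohomology is nonzero, and you must verify that $\pi_1$ of the base acts trivially on $H^*(\text{fiber})$ before the $E_2$-page can be read off as $H^*(\text{base};\pm\QQ)\tensor H^*(\text{fiber})$; this is true (swaps of $\CC^2$-factors preserve orientation) but it is a computation. All of this can be made to work, but it is precisely the part that takes effort. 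The paper avoids it by choosing, case by case, a fibration in which the genericity constraint sits in an acyclic fiber (the recursive maps onto previously handled $A_j$ with fiber $\CC$ or $\CC^3$) or entirely in the base (the space of pairs of lines meeting $\ell$ for \ref{two-coplanar-pairs}, giving the clean fiber $\UConf_2(\CC)\times\UConf_2(\CC)$). That choice of fibration is not the one your framework first produces, and finding it is the real content of the proof.
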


\begin{proof}
We need to show that $H^*(A_i; \pm \QQ) = 0$ when $i$ is one of 
\ref{two-coplanar},
\ref{one-on-two-coplanar},
\ref{three-coplanar},
\ref{two-coplanar-one-off},
\ref{three-no-two-coplanar},
\ref{one-on-two-coplanar-one-off},
\ref{one-on-no-two-coplanar},
\ref{three-coplanar-one-off},
\ref{two-coplanar-pairs},
\ref{two-coplanar-two-off},
\ref{four-no-two-coplanar}.
Let's deal with each in turn.

\begin{description}[wide,itemindent=*]

\item[\ref{two-coplanar}, $P, Q \notin \ell$, but $P$, $Q$ and $\ell$ coplanar]
Mapping $\{P,Q\} \mapsto H = \gen{P,Q,\ell}$, the projective span of $P,Q,\ell$, i.e.\ the plane containing $P$, $Q$ and $\ell$, we get a map from $A_{\type{two-coplanar}}$ to the space of planes in $\PP^3$ containing $\ell$, which is a $\PP^1 \cong \ell^\vee \subset (\PP^3)^{\vee}$.
This is a fiber bundle
\[\begin{tikzcd}
\UConf_2(H \setminus \ell) \arrow[r,hook] & A_{\type{two-coplanar}} \arrow[d,->>] \\ & \PP^1
\end{tikzcd}\]
and the local coefficients $\pm \QQ$ restrict to the fiber to the sign local coefficient on $\UConf_2(H \setminus \ell) \cong \UConf_2(\CC^2)$.
But $H^*(\UConf_2(\CC^2),\pm \QQ) = 0$ from \cref{totaro-computations}, so we are done.

\item[\ref{one-on-two-coplanar}, $P \in \ell$, $Q, R \notin \ell$, but $Q$, $R$ and $\ell$ coplanar]
Here, even though $P$, $Q$ and $R$ are a priori unordered, we can't (continuously) swap $R$ with one of $P$ and $Q$.
So there is a well-defined map $\{P,Q,R\} \mapsto \{Q,R\}$, and we get a fiber bundle:
\[\begin{tikzcd}
\CC \cong \ell \setminus \gen{Q,R} \rar[hookrightarrow] & A_{\type{one-on-two-coplanar}} \dar[->>] \\ & A_{\type{two-coplanar}}
\end{tikzcd}\]
The local coefficients $\pm \QQ$ on the total space pull-back from $\pm \QQ$ on base (that is, the map $\pi_1(A_{\type{one-on-two-coplanar}}) \to \{\pm 1\}$ factors through $\pi_1(A_{\type{two-coplanar}})$.
But as we just showed, $H^*(A_{\type{two-coplanar}}; \pm \QQ) = 0$, so we are done.

\item[\ref{three-coplanar}, $P, Q, R \notin \ell$, but $P$, $Q$, $R$ and $\ell$ coplanar]
Mapping $\{P,Q,R\} \mapsto H = \gen{P,Q,R,\ell}$, we get a fiber bundle:
\[\begin{tikzcd}
F \rar[hookrightarrow] & A_{\type{three-coplanar}} \dar[->>] \\ & \PP^1
\end{tikzcd}\]
The fiber is the space of three (unordered points) non-collinear points on $H \setminus \ell \cong \CC^2$, and the local coefficients $\pm \QQ$ restrict to the local coefficients $\pm \QQ$ on $F \subset \UConf_3(\CC^2)$.
Since $\pi_1(F) \to \{\pm 1\}$ factors through $\fS_3$, we can go to the associated $\fS_3$ cover $\widetilde F \subset \PConf_3(\CC^2)$, and then by transfer, $H^*(F; \pm \QQ)$ is the summand of $H^*(\widetilde F; \QQ)$ where $\fS_3$ acts by the sign representation.
But $\widetilde F = \{(P,Q,R)\}$ can be broken up as fiber bundles (see~\cref{fig:three-coplanar}):
\[\begin{tikzcd}
\CC^2 \setminus \gen{P,Q} \rar[hookrightarrow] & \widetilde F = \{(P,Q,R)\} \dar[->>] \\ \CC^2 \setminus \{P\} \rar[hookrightarrow] & \{(P,Q)\} \dar[->>] \\ & \CC^2 = \{P\}
\end{tikzcd}\]
So $H^*(\widetilde F; \QQ) = H^*(S^1 \times S^3; \QQ)$, but more importantly for us, we show that the $\fS_3$ action on $H^*(\widetilde F; \QQ)$ is trivial, which implies $H^*(F; \pm \QQ) = 0$ as needed.
It is enough to check that on each generator (which has to come from one of the fibers or the base), the transposition acts trivially (since transpositions generate $\fS_3$).
The transposition $(QR)$ acts trivially on $\CC^2 \setminus \{P\}$, and hence trivially on the generator of $H^3(\CC^2 \setminus \{P\}) \cong H^3(\widetilde F)$ (the latter description is independent of the choice of transposition).
Similarly the transposition $(PQ)$ acts by $-1$ on $\CC^2 \setminus \gen{P,Q}$, but this is degree $1$ on an even-dimensional vector space (and odd-dimensional sphere), so acts trivially on the generator of $H^1(\widetilde F)$.

\begin{figure}
\centering
\includegraphics{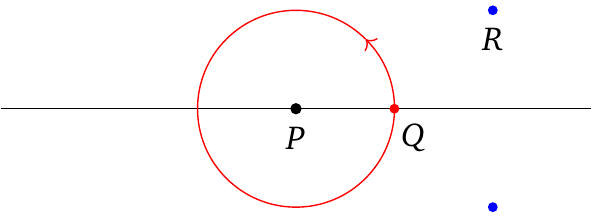}
\caption{The (real) fibering of $\widetilde F$ in the case \ref{three-coplanar}.
The spaces $\CC^2 \setminus \{P\} \simeq S^3$ and $\CC^2 \setminus \gen{P,Q} \simeq S^1$ in the complex fiber respectively correspond to the pictured $S^1$ and $S^0$ in the real points.}
\label{fig:three-coplanar}
\end{figure}

\item[\ref{two-coplanar-one-off}, $P, Q, R \notin \ell$, $P$, $Q$ and $\ell$ coplanar, but $R$ not on that plane]
Mapping $\{P,Q,R\} \mapsto \{P,Q\}$, we get a fiber bundle
\[\begin{tikzcd}
\CC^3 \cong \PP^3 \setminus \gen{P,Q,\ell} \rar[hookrightarrow] & A_{\type{two-coplanar-one-off}} \dar[->>] \\ & A_{\type{two-coplanar}}
\end{tikzcd}\]
and we are again done, similar to the case \ref{one-on-two-coplanar} above. 

\item[\ref{three-no-two-coplanar}, $P, Q, R \notin \ell$, no two coplanar with $\ell$]

In this case, we go to the $\fS_3$ cover $\widetilde A$ of $A_{\type{three-no-two-coplanar}}$, so that similar to above, $H^*(A_{\type{three-no-two-coplanar}}; \pm \QQ)$ is the sign-representation summand of $H^*(\widetilde A; \QQ)$.
Then $\widetilde A$ can be broken up by fiber bundles (see~\cref{fig:three-no-two-coplanar}):
\[\begin{tikzcd}
(\CC \setminus 0) \times (\CC^2 \setminus 0) \cong  \PP^3 \setminus (\gen{P,\ell} \cup \gen{Q,\ell} \cup \gen{P,Q}) \rar[hookrightarrow] & \widetilde A \dar[->>] \\
 \CC^3 \cong  \PP^3\setminus \gen{P,\ell} \rar[hookrightarrow] & \{(P,Q)\}\dar[->>]\\
&  \{P\} \rar[equal] & \PP^3 \setminus \ell \simeq \PP^1
\end{tikzcd}\]
The $\fS_3$ action on $H^*(\widetilde A)$ is again trivial by arguments similar to above.

\begin{figure}
\centering
\includegraphics{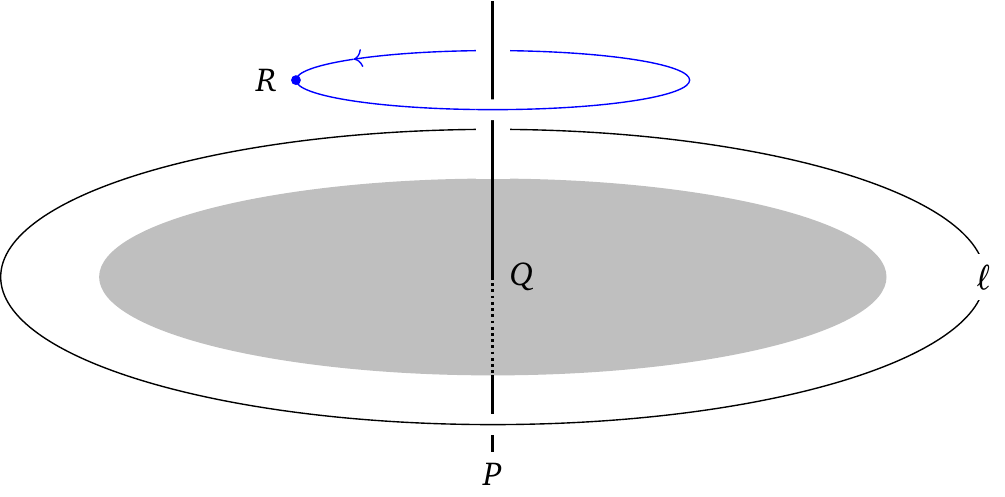}
\caption{The (real) fiber for $R$ in the cover $\widetilde A$ of $A_{\type{three-no-two-coplanar}}$.
The complex fiber is homeomorphic to $(\CC \setminus 0) \times (\CC^2 \setminus 0)$.
The point $P$ and the line $\ell$ are at infinity, and the loop pictured corresponds to the generator in $H^3(\CC^2 \setminus 0)$.}
\label{fig:three-no-two-coplanar}
\end{figure}

\item[\ref{one-on-two-coplanar-one-off}, $P \in \ell$, $Q,R,S \notin \ell$, $Q$ and $R$ coplanar with $\ell$, $S$ not on that plane]
Similar to above, mapping $\{P,Q,R,S\} \mapsto \{Q,R,S\}$ we get a fiber bundle:
\[\begin{tikzcd}
\ell \setminus \gen{Q, R, S} \rar[hookrightarrow] & A_{\type{one-on-two-coplanar-one-off}} \dar[->>] \\
& A_{\type{two-coplanar-one-off}}
\end{tikzcd}\]
We are done by previous arguments.

\item[\ref{one-on-no-two-coplanar}, $P \in \ell$, $Q, R, S \notin \ell$, no two of $Q$, $R$ and $S$ coplanar with $\ell$]
Mapping $\{P,Q,R,S\} \mapsto \{Q,R,S\}$ we get a fiber bundle:
\[\begin{tikzcd}
\ell \setminus \gen{Q, R, S} \rar[hookrightarrow] & A_{\type{one-on-no-two-coplanar}} \dar[->>] \\
& A_{\type{three-no-two-coplanar}}
\end{tikzcd}\]
We are done by previous arguments.

\item[\ref{three-coplanar-one-off}, $P,Q,R,S \notin \ell$, $P$, $Q$, $R$ coplanar with $\ell$, $S$ not on that plane]
Mapping $\{P,Q,R,S\} \mapsto \{P,Q,R\}$:
\[\begin{tikzcd}
\CC^3 \cong \PP^3 \setminus \gen{P, Q, R, \ell} \rar[hookrightarrow] & A_{\type{three-coplanar-one-off}} \dar[->>] \\
& A_{\type{three-coplanar}}
\end{tikzcd}\]
We are done by previous arguments.

\item[\ref{two-coplanar-pairs}, $P,Q,R,S \notin \ell$, $P$ and $Q$ coplanar with $\ell$, $R$ and $S$ coplanar with $\ell$]
We can map $\{P,Q,R,S\} \mapsto \{\gen{P, Q}, \gen{R, S}\}$, the two lines through $PQ$ and $RS$ and get a map $A_{\type{two-coplanar-pairs}} \to B$, where $B$ is the set of unordered pairs of lines in $\PP^3$ that both intersect $\ell$, but so that the three lines are not coplanar (in particular the pair of lines do not themselves intersect).
This is a fiber bundle:
\[\begin{tikzcd}
 \UConf_2(L_1 \setminus \ell) \times \UConf_2(L_2 \setminus \ell) \rar[hookrightarrow] & A_{\type{two-coplanar-pairs}} \dar[->>] \\
 & B
\end{tikzcd}\]
Since $L_i \setminus \ell \cong \CC$, and $H^*(\UConf_2(\CC),\pm \QQ) = 0$, $H^*(A_{\type{two-coplanar-pairs}}; \pm \QQ) = 0$.

\item[\ref{two-coplanar-two-off}, $P,Q,R,S \notin \ell$, $P$ and $Q$ coplanar with $\ell$, no other pair coplanar with $\ell$]
Forthis case mapping $\{P,Q,R,S\} \mapsto \{P,Q\}$, we get a fiber bundle:
\[\begin{tikzcd}
\{\{R, S\}\} \rar[hookrightarrow] & A_{\type{two-coplanar-two-off}} \dar[->>] \\
& A_{\type{two-coplanar}}
\end{tikzcd}\]
Here the local coefficients $\pm \QQ$ on $A_{\type{two-coplanar-two-off}}$ is induced by $\pm \QQ$ on $A_{\type{two-coplanar}}$ and $\pm \QQ$ on the fibers $\{\{R,S\}\} \subset \UConf_2(\PP^3)$.
We are done by previous arguments.

\item[\ref{four-no-two-coplanar}, $P, Q, R, S \notin \ell$, no two coplanar with $\ell$]
By an argument analogous to the case of \ref{three-no-two-coplanar}, $A_{\type{four-no-two-coplanar}}$ has an $\fS_4$ cover by ordering the four points that breaks up as a fiber bundle over the $\fS_3$ cover of $A_{\type{three-no-two-coplanar}}$.
The sign representation doesn't occur in the cohomology of this cover, so we are done.

Alternatively, one can note that $\UConf_4(\PP^3 \setminus \ell)$ has the following stratification:
\[\UConf_4(\PP^3 \setminus \ell) = \{\text{four coplanar points in $\PP^3 \setminus \ell$}\}  \sqcup A_{\type{three-coplanar-one-off}} \sqcup A_{\type{two-coplanar-pairs}} \sqcup A_{\type{two-coplanar-two-off}} \sqcup A_{\type{four-no-two-coplanar}} \dispunct. \]
The first term can be further stratified into two sets:
\[\{\text{four coplanar points in $\PP^3 \setminus \ell$}\} = Y_0 \sqcup Y_1 \dispunct,\]
where
\[Y_0 = \{\text{four points in $H \setminus \ell$ for $H$ some plane in $\PP^3$ containing $\ell$}\} \dispunct,\]
and 
\[Y_1 = \{\text{four points in $H \setminus \ell$ for $H$ some plane in $\PP^3$ \emph{not} containing $\ell$}\} \dispunct.\]
In either case, mapping to the plane $H$ gives us two fiber bundles 
\[\begin{tikzcd}
 \UConf_4(\CC^2) \cong \UConf_4(H \setminus \ell) \rar[hookrightarrow] & Z_0 \dar[->>] \\
 & \{H \supset \ell\}
\end{tikzcd} \qquad \begin{tikzcd}
 \UConf_4(\PP^2 \setminus \{\bullet\}) \cong \UConf_4(H \setminus \ell) \rar[hookrightarrow] & Y_1 \dar[->>] \\
 & \{H \not\supset \ell\}
\end{tikzcd}\]
Using \cref{totaro-computations} and previous arguments, 
\[H^*(\UConf_4(\PP^3 \setminus \PP^1); \pm \QQ) \cong H^*(Y_0; \pm \QQ) \cong H^*(Y_1; \pm \QQ) = 0 \dispunct.\]
Since we've already shown that $A_{\type{three-coplanar-one-off}}$, $A_{\type{two-coplanar-pairs}}$ and $A_{\type{two-coplanar-two-off}}$ are $\pm \QQ$-acyclic, $A_{\type{four-no-two-coplanar}}$ must be as well.
\end{description}
\end{proof}

Recall that by \ref{stratification} we have spectral sequences $E^r_{p,q} \implies \widebar H_{p+q} (\sigma)$ and $e^r_{p,q}$ that let us compute $\widebar H_*(F_{\type{everything}}) = \widetilde H_{*-1}(Z)$, where 
\[Z = \bigcup_{i \ne \type{everything}} \Phi_i \dispunct.\]

\begin{proposition}\label{shape-of-page-1}
The spectral sequence $E^r_{p,q} \implies \widebar H_{p+q} (\sigma)$ has the page $E^1_{p,q}$ as in \cref{big-SS}.
The spectral sequence $e^r_{p,q} \implies H_{p+q} (Z)$ has the page $e^1_{p,q}$ as in \cref{small-SS}.
\end{proposition}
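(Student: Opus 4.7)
The plan is to fill in the $E^1$ page box by box using the identifications already set up, and then handle type~\type{everything} separately via the auxiliary spectral sequence $e^r$. By \eqref{ss-terms}, each column $E^1_{p,\bullet}$ is a direct sum over subtypes $i$ with $\deg(i)=p$ of $\widebar H_{p+q}(F_i)$, and by \eqref{thom-isomorphism} and \eqref{local-coefficient-isomorphism} each such term equals $H^{2\dim_\CC A_i + n-1-(p+q)+2\dim_\CC L(K)}(A_i;\pm\QQ)$ whenever $K$ consists of $n$ points. First, \cref{vanishing-cohomology} annihilates all subtypes of \ref{point}--\ref{four-points} except \ref{one-on}, \ref{one-off}, \ref{two-on}, \ref{one-on-one-off}, \ref{two-not-coplanar}, \ref{two-on-one-off}, \ref{one-on-two-not-coplanar}, \ref{two-on-two-off}, and by \ref{monodromy} the subtypes of \ref{three-points}' line-type siblings (types III, V, VI, VIII, IX, X) contribute nothing. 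Thus only a short list of subtypes (plus \type{everything}) needs genuine computation.

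For the surviving subtypes I would compute $H^*(A_i;\pm\QQ)$ by the same fibre-bundle-and-Künneth technique used in the proof of \cref{vanishing-cohomology}, but now landing in nonzero spaces. Concretely: $A_{\type{one-on}}=\ell\cong\PP^1$, and $A_{\type{one-off}}=\PP^3\setminus\ell$ deformation retracts to $\PP^1$ by \cref{hyperspace-complement}, so each contributes $\QQ$ in cohomological degrees $0$ and $2$. $A_{\type{two-on}}=\UConf_2(\PP^1)$, which by \cref{totaro-computations} gives $\QQ$ in degree $2$ only. $A_{\type{one-on-one-off}}\simeq\PP^1\times\PP^1$, giving a Künneth answer. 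For $A_{\type{two-not-coplanar}}$ I would use the stratification $\UConf_2(\PP^3\setminus\ell)=A_{\type{two-coplanar}}\sqcup A_{\type{two-not-coplanar}}$ together with the open-closed long exact sequence in Borel--Moore (or compactly supported) cohomology, noting that $A_{\type{two-coplanar}}$ is $\pm\QQ$-acyclic by the proof of \cref{vanishing-cohomology} and that $H^*(\UConf_2(\PP^3\setminus\PP^1);\pm\QQ)$ is computed in \cref{totaro-computations}. The three-point subtype \ref{two-on-one-off} splits via projection as a bundle $(\PP^3\setminus\ell)\to A_{\type{two-on-one-off}}\to\UConf_2(\ell)$, subtype \ref{one-on-two-not-coplanar} as $\ell\to A_{\type{one-on-two-not-coplanar}}\to A_{\type{two-not-coplanar}}$, and the four-point subtype \ref{two-on-two-off} as $A_{\type{two-not-coplanar}}\to A_{\type{two-on-two-off}}\to\UConf_2(\ell)$; in each case the local coefficients decompose compatibly with the bundle and the Künneth formula yields the cohomology.

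Once each $H^*(A_i;\pm\QQ)$ is in hand, the remaining work is purely bookkeeping. For each subtype I would compute $\dim_\CC L(K)$ from the conditions imposed by requiring the cubic polynomial on $\Pi_\ell$ to be singular on $K$ (each required singular point $P\notin\ell$ imposes $4$ linear conditions, a point $P\in\ell$ imposes $3$ independent conditions once $F|_\ell\equiv0$ is already enforced, etc.), then read off $p=\deg(i)=14-\dim_\CC L(K)$ and place the shifted cohomology groups of $A_i$ into the correct box via \eqref{thom-isomorphism} and \eqref{local-coefficient-isomorphism}. This produces the picture in \cref{big-SS} except for the column coming from type~\type{everything}, which by \ref{big-case} is $\widebar H_*(CZ)=\widetilde H_{*-1}(Z)$. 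To control that column, I would run the auxiliary spectral sequence $e^1_{p,q}$ using exactly the same per-subtype calculations (with $\Phi_j$ in place of $F_j$, i.e.\ without the Thom shift), which gives \cref{small-SS}; the result is then read into the \type{everything} column after a degree shift by one.

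The main obstacle is not conceptual but combinatorial: keeping track of three independent dimensional offsets (the Thom shift $2\dim_\CC L(K)$, the Poincaré duality shift $2\dim_\CC A_i$, and the orientation shift $n-1$ from $\Lambda(K)$) across roughly a dozen surviving subtypes, and checking that the local coefficient system $\pm\QQ$ really does restrict to the sign system on each fibre factor used in the bundle decompositions. I expect no further cancellations or cross-identifications to be needed, so the $E^1$ page is determined entirely by this assembly.
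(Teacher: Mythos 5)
Your overall strategy for the columns $p \ne 14$ matches the paper's: use \eqref{ss-terms}, \eqref{thom-isomorphism}, \eqref{local-coefficient-isomorphism}, kill most subtypes by \cref{vanishing-cohomology}, and compute the surviving $H^*(A_i;\pm\QQ)$ by the fiber-bundle/Künneth technique. Some of your bundle decompositions (e.g.\ $A_{\type{two-not-coplanar}}\to A_{\type{two-on-two-off}}\to\UConf_2(\ell)$ instead of the paper's $\CC^4 \to A_{\type{two-on-two-off}}\to\UConf_2(\ell)\times\UConf_2(\ell^\perp)$) are genuinely different but lead to the same answers, since $A_{\type{two-not-coplanar}}$ retracts correctly. (A small slip: a singular point $P\in\ell$ imposes only $2$ conditions on $\Pi_\ell$, not $3$, since $\dim L(\{P\}) = 14$ and $\dim\Pi_\ell = 16$; but you acknowledge you'd derive these from the geometry, and the paper just records them in \cref{numbers-table}.)

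The genuine gap is the $p=14$ column. You say you would \enquote{run the auxiliary spectral sequence $e^r$ \dots{} and read the result into the \type{everything} column after a degree shift by one,} but this presupposes you can read off $H_*(Z) = \widebar H_{*+1}(\Phi_{\type{everything}})$, which is the abutment of $e^r$, not its $e^1$ page. Determining the abutment from $e^1$ requires controlling the differentials, and the $e^1$ page in \cref{small-SS} is far from sparse enough to conclude $\widetilde H_*(Z) = 0$ for free. The paper sidesteps a direct computation with a global argument: by \cref{andreotti--frankel}, $\widetilde X_\ell$ is a $16$-dimensional Stein manifold, so $\widetilde H^j(\widetilde X_\ell) = 0$ for $j>16$, hence $\widebar H_d(\sigma) = 0$ for $d \le 14$. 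Any surviving class in $e^\infty_{p,q}$ with $1\le p+q\le 14$ would land in $E^1_{14,d-13}$ where it cannot cancel against any other column (by the shape you already established for $p\ne 14$), producing a nonzero $\widebar H_{d+1}(\sigma)$ for some $d+1 \le 15$, which contradicts nothing in that range directly — but in fact forces $\widetilde H^{31-d}(\widetilde X_\ell)\ne 0$ with $31-d\ge 17$, a contradiction. That step, which proves the entire $p=14$ column of $E^1$ vanishes, is essential to the claimed shape of \cref{big-SS} and is absent from your proposal.
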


\begin{figure}
\centering
\includegraphics{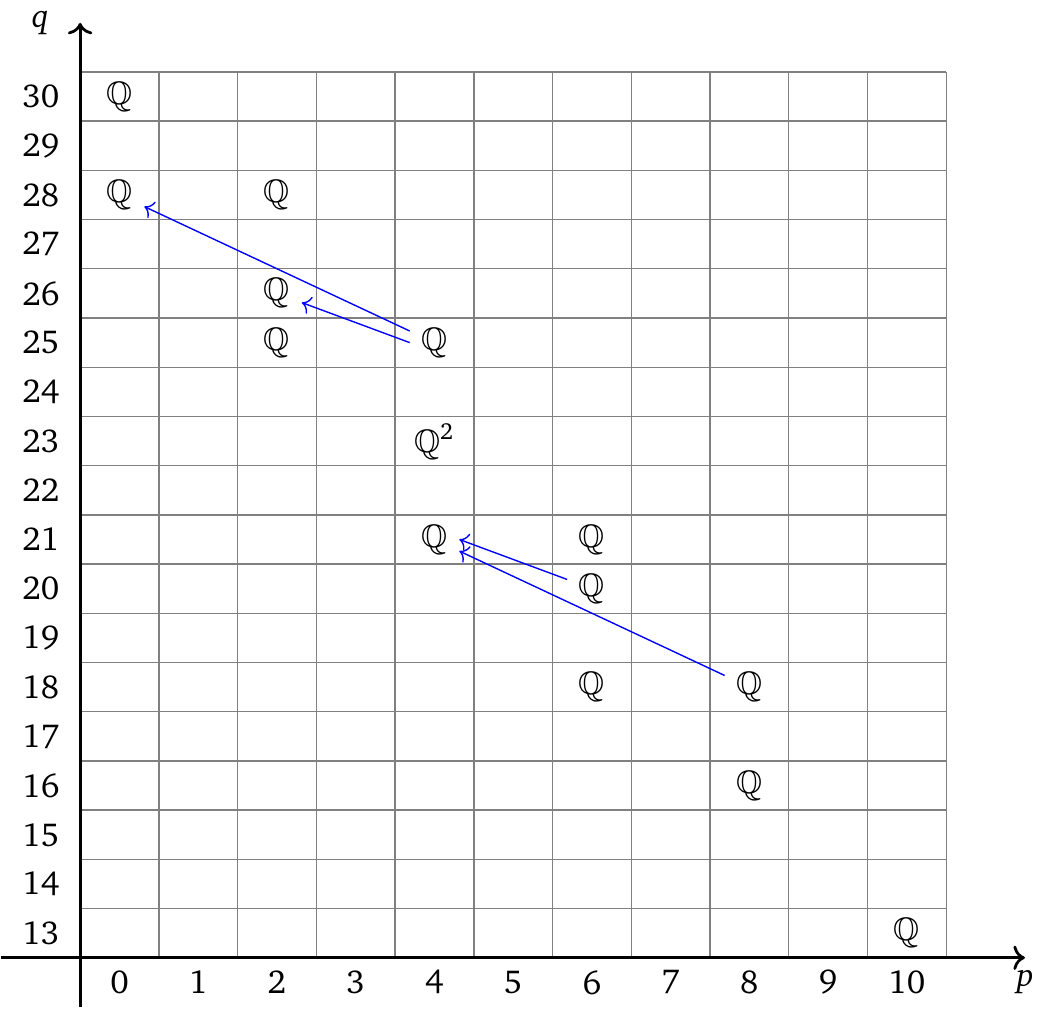}
\caption{Spectral sequence page $E^1_{p,q}$ for $\widebar H_{p+q} (\sigma)$ (with $0$s omitted) and all potentially non-zero differentials in subsequent pages}
\label{big-SS}
\end{figure}

\begin{figure}
\centering
\includegraphics{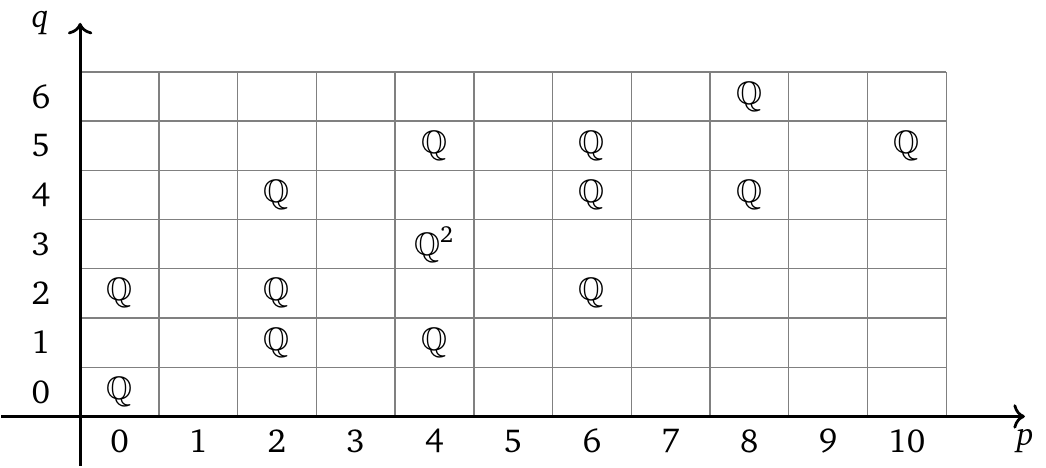}
\caption{Spectral sequence page $e^1_{p,q}$ for $H_{p+q}(Z)$ (with $0$s omitted)}
\label{small-SS}
\end{figure}

\begin{proof}
Recall that by construction, the terms of $E^1$ and $e^1$ are related by Thom isomorphisms:
\[E^1_{p,q + 2(14-p)} \cong e^1_{p,q}\]
except for $p = 14$, where $e^1_{14,*} \equiv 0$.
So we first go through more case work to establish columns $p \ne 14$.

By \cref{ss-terms,thom-isomorphism,local-coefficient-isomorphism} and careful bookkeeping, it is enough to find $H^*(A_i; \pm \QQ)$ along with the numbers $\dim(A_i) = \dim_\CC(A_i)$ and $\dim(L(K)) = \dim_\CC(L(K))$ for $K \in A_i$, for the subtypes $i$ of \ref{point}, \ref{two-points}, \ref{three-points} \ref{four-points} (see~\cref{numbers-table} for the relevant numerics).
Further, there are only eight subtypes remaining --- the exceptions from \cref{vanishing-cohomology}.

\begin{table}
\centering
\begin{tabular}{crrrrrrrrr}
\toprule
$i$ & \ref{one-on} & \ref{one-off} & \ref{two-on} & \ref{one-on-one-off} & \ref{two-not-coplanar} & \ref{two-on-one-off} & \ref{one-on-two-not-coplanar} & \ref{two-on-two-off} & \ref{everything}\\
\midrule 
$\dim A_i$ & $1$ & $3$ & $2$ & $4$ & $6$ & $5$ & $7$ & $8$ & $0$ \\
$\dim L(K)$ & $14$ & $12$ & $12$ & $10$ & $8$ & $8$ & $6$ & $4$ & $0$ \\
\bottomrule
\end{tabular}
\caption{$\dim A_i$ and $\dim L(K)$ for $K \in A_i$ for each subtype $i$ excepted in \cref{vanishing-cohomology}.} \label{numbers-table}
\end{table}

\begin{description}[wide]
\item[\ref{one-on}, $P \in \ell$]
$A_{\type{one-on}} = \ell \cong \PP^1$, since there is only one point, the coefficients $\pm \QQ$ are trivial, so
\[H^*(A_{\type{one-on}}; \pm \QQ) = H^*(\PP^1) = \begin{cases*} \QQ & $* = 0,2$\\ 0 & otherwise.\end{cases*}\]
This contributes to $E^1_{0,28} \cong e^1_{0,0}$ and $E^1_{0,30} \cong e^1_{0,2}$ since $\dim(A_{\type{one-on}}) =  1$ and $\dim(L(K)) = 14$.

\item[\ref{one-off}, $P \notin \ell$]
$A_{\type{one-off}} = \PP^3 - \ell \simeq \PP^1$.
Again, the coefficients are trivial, so
\[H^*(A_{\type{one-off}}; \pm \QQ) = H^*(\PP^1) = \begin{cases*} \QQ & $* = 0,2$\\ 0 & otherwise.\end{cases*}\]
This contributes to $E^1_{2,26} \cong e^1_{2,2}$ and $E^1_{2,28} \cong e^1_{2,4}$ since $\dim(A_{\type{one-off}}) =  3$ and $\dim(L(K)) = 12$.

\item[\ref{two-on}, $P, Q \in \ell$]
$A_{\type{two-on}} = \UConf_2(\ell) \cong \UConf_2(\PP^1)$.
By \cref{totaro-computations},
\[H^*(A_{\type{two-on}}; \pm \QQ) = \begin{cases*} \QQ & $* = 2$\\ 0 & otherwise.\end{cases*}\]
This contributes to $E^1_{2,25} \cong e^1_{2,1}$ since $\dim(A_{\type{two-on}}) =  2$ and $\dim(L(K)) = 12$.

\item[\ref{one-on-one-off}, $P\in \ell$, $Q \notin \ell$]
$A_{\type{one-on-one-off}} \cong \ell \times (\PP^3 \setminus \ell) \simeq \PP^1 \times \PP^1$ and the coefficients are trivial.
Hence,
\[H^*(A_{\type{one-on-one-off}}; \pm \QQ) \cong H^*(\PP^1 \times \PP^1) = \begin{cases*} \QQ & $* = 0, 4$\\ \QQ^2 & $* = 2$ \\ 0 & otherwise.\end{cases*}\]
This contributes to $E^1_{4,21} \cong e^1_{4,1}$, $E^1_{4,23} \cong e^1_{4,3}$ and $E^1_{4,25} \cong e^1_{4,5}$ since $\dim(A_{\type{two-on}}) =  4$ and $\dim(L(K)) = 10$.

\item[\ref{two-not-coplanar}, $P, Q \notin \ell$, $P$ and $Q$ not coplanar with $\ell$]
$A_{\type{two-not-coplanar}} = \UConf_2(\PP^3 \setminus \ell) \setminus A_{\type{two-coplanar}})$.
\Cref{vanishing-cohomology} shows that $H^*(A_{\type{two-coplanar}}; \pm \QQ) = 0$, so from the Gysin sequence, and by \cref{totaro-computations},
\[H^*(A_{\type{two-not-coplanar}}, \pm \QQ) \cong H^*(\UConf_2(\PP^3 \setminus \PP^1),\pm \QQ) = \begin{cases*} \QQ & $* = 0,2$\\ 0 & otherwise.\end{cases*}\]
This contributes to $E^1_{6,21} \cong e^1_{6,5}$ since $\dim(A_{\type{two-on}}) =  6$ and $\dim(L(K)) = 8$.

\item[\ref{two-on-one-off}, $P, Q \in \ell$, $R \notin \ell$]
$A_{\type{two-on-one-off}} \cong \UConf_2(\ell) \times (\PP^3 \setminus \ell)$, and the local coefficients restrict to trivial coefficients on the second factor $\PP^3 \setminus \ell \simeq \PP^1$.
Thus,
\[H^*(A_{\type{two-on-one-off}}; \pm \QQ) \cong \dsum_{a+b = *} H^a(\UConf_2(\PP^1); \pm \QQ) \tensor H^b(\PP^1) = \begin{cases*} \QQ & $* = 2,4$\\ 0 & otherwise.\end{cases*}\]
This contributes to $E^1_{6,18} \cong e^1_{6,2}$ and $E^1_{6,20} \cong e^1_{6,4}$ since $\dim(A_{\type{two-on-one-off}}) = 5$ and $\dim(L(K)) = 8$.

\item[\ref{one-on-two-not-coplanar}, $P \in \ell$, $Q, R \notin \ell$, $Q$ and $R$ not coplanar with $\ell$]
Since the line $\gen{Q,R}$ doesn't intersect $\ell$, $P$ can be any point on $\ell$ for any choice of $Q$ and $R$.
Thus $A_{\type{one-on-two-not-coplanar}} = \ell \times A_{\type{two-not-coplanar}}$ and the local coefficients are trivial on the first factor ($\ell$ is anyway simply connected).
Hence
\[H^*(A_{\type{one-on-two-not-coplanar}}; \pm \QQ) \cong \dsum_{a+b = *} H^a(\PP^1) \tensor H^b(A_{\type{two-not-coplanar}}; \pm \QQ) = \begin{cases*} \QQ & $* = 2,4$\\ 0 & otherwise.\end{cases*}\]
This contributes to $E^1_{8,16} \cong e^1_{8,4}$ and $E^1_{8,18} \cong e^1_{8,6}$ since $\dim(A_{\type{two-on-one-off}}) = 7$ and $\dim(L(K)) = 6$.

\item[\ref{two-on-two-off}, $P, Q \in \ell$, $R, S \notin \ell$]
By definition of \ref{four-points}, the four points cannot be coplanar.
This is equivalent to $R$ and $S$ not being coplanar with $\ell$.
If $\rho \from \PP^3 \setminus \ell \to \ell^\perp$ is the projection, then this is further equivalent to $\rho(R) \ne \rho(S)$.
Note that $\rho^{-1}(T) = \gen{T, \ell} \setminus \ell \cong \CC^2$.
Thus, mapping $\{P,Q,R,S\} \mapsto (\{P,Q\}, \{\rho(R), \rho(S)\})$, we get a bundle:
\[\begin{tikzcd}
 \CC^4 \rar[hookrightarrow] & A_{\type{two-on-two-off}} \dar[->>] \\
 & \UConf_2(\ell) \times \UConf_2(\ell^\perp) 
\end{tikzcd}\]
This implies, using \cref{totaro-computations},
\[H^*(A_{\type{two-on-two-off}}; \pm \QQ) \cong \dsum_{a+b = *} H^a(\UConf_2(\PP^1); \pm \QQ) \tensor H^a(\UConf_2(\PP^1); \pm \QQ) = \begin{cases*} \QQ & $* = 4$\\ 0 & otherwise.\end{cases*}\]
This contributes to $E^1_{10,13} \cong e^1_{10,5}$ since $\dim(A_{\type{two-on-two-off}}) = 8$ and $\dim(L(K)) = 4$.
\end{description}

Thus we've computed the pages $e^1_{p,q}$ and $E^1_{p,q}$ except the $p = 14$ column of the latter.
For \ref{everything}, $L(K) = 0$, so $E^1_{14,q} \cong \widebar H_{14+q}(\Phi_{\type{everything}})$.
Now, if any term with $1 \le d = p+q \le 14$ remains non-zero in $e^\infty_{p,q}$, then it would appear as $\widebar H_{d+1}(\Phi_{\type{everything}})$ and hence as a term $E^1_{14, d-13}$, which cannot interact with any of the other terms, by the shapes of the other columns, which we have already determined.
That means $0 \ne \widebar H_{d+1} (\sigma) \cong \widetilde H^{31-d} (\widetilde X_\ell)$, which is a contradiction with $\widetilde X_\ell$ being a $16$-dimensional Stein manifold, as in \cref{andreotti--frankel}.
This implies, given the shape of $e^1_{p,q}$, that $\widebar H_*(\Phi_{\type{everything}}) \equiv 0$, so we have also verified $E^1_{14,*}$.
\end{proof}

\begin{proposition}\label{cohomology-of-X-l}
The spectral sequence $E^r_{p,q}$ degenerates at $r = 1$ and hence the rational cohomology of $\widetilde X_\ell$ is given by 
\[H^*(\widetilde X_\ell; \QQ) \cong \begin{cases*}
\QQ & if $* = 0, 2, 6, 8$\\
\QQ^2 & if $* = 1, 3, 5, 7$\\
\QQ^4 & if $* = 4$\\
0 & otherwise.
\end{cases*}\]
\end{proposition}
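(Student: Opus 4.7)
The plan is to play the two sides of the problem against each other: Proposition~\ref{shape-of-page-1} provides an upper bound on $\dim H^*(\widetilde X_\ell; \QQ)$ via convergence of the spectral sequence, while Lemma~\ref{surjection} provides a lower bound via the surjection onto $H^*(\GL(2) \times \GL(2); \QQ)$. Once these two bounds agree in every degree, all differentials $d^r$ for $r \geq 1$ must vanish for purely dimensional reasons, so the spectral sequence degenerates at $E^1$ and the cohomology can be read off.

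For the lower bound, I would recall that $H^*(\GL(2) \times \GL(2); \QQ)$ is an exterior algebra on generators in degrees $1, 1, 3, 3$, with Poincaré polynomial $(1+t)^2(1+t^3)^2 = 1 + 2t + t^2 + 2t^3 + 4t^4 + 2t^5 + t^6 + 2t^7 + t^8$. By \cref{surjection}, $\dim H^i(\widetilde X_\ell; \QQ)$ is at least the corresponding coefficient of this polynomial.

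For the upper bound, I would use Alexander duality \eqref{alexander-duality} and the standard inequality
\[\dim \widetilde H^i(\widetilde X_\ell; \QQ) = \dim \widebar H_{31 - i}(\Sigma_\ell) \leq \sum_{p + q = 31 - i} \dim E^1_{p,q} \dispunct.\]
Tallying the nonzero entries of $E^1$ listed in the proof of \cref{shape-of-page-1} along the antidiagonals $p + q = s$ for $s = 23, 24, \dots, 30$ gives in turn $1, 2, 1, 2, 4, 2, 1, 2, 1$; together with $H^0(\widetilde X_\ell; \QQ) = \QQ$ coming from connectedness, this is exactly the Poincaré polynomial of $\GL(2) \times \GL(2)$.

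Since the upper bound coincides with the lower bound in every degree, each summand of each $E^\infty_{p,q}$ must already equal the corresponding summand of $E^1_{p,q}$, forcing the degeneration at $E^1$. Reading off the ranks then yields the stated formula. The main obstacle is simply bookkeeping: carefully matching up the many nonzero contributions produced in \cref{shape-of-page-1}, with their bidegrees, so that the antidiagonal totals really do give the Poincaré polynomial $(1+t)^2(1+t^3)^2$ without any off-by-one errors.
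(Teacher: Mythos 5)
Your approach is correct, and it is genuinely different from the paper's. The paper obtains the degeneration by invoking \cref{hypersurface-complement} to factor the Poincaré polynomial as $P_\QQ(\widetilde X_\ell, t) = (1+t)\,P_\QQ(\widetilde M_\ell, t)$; since the sparse $E^1$ page already forces $H^0 = \QQ$, $H^1 = \QQ^2$, $H^4 = \QQ^4$, $H^7 = \QQ^2$, $H^8 = \QQ$ (these antidiagonals support no possible differentials), and the only potentially nonzero differentials go in or out of $E^1_{4,25}$ and $E^1_{4,21}$, the factorization forces $H^2 \ne 0$ and $H^6 \ne 0$ (which sit alone in their antidiagonals, at $(4,25)$ and $(4,21)$ respectively), killing those differentials. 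You instead squeeze the dimensions between the $E^1$ upper bound and the lower bound from \cref{surjection}, which is proved independently in Section~2 by exhibiting explicit geometric cycles in $\Sigma_\ell$ and using Peters--Steenbrink; there is no circularity. Each approach buys something: the paper's argument stays entirely inside the Vassiliev machinery and needs only the elementary bundle lemma, while yours makes the coincidence with $H^*(\GL(2)\times\GL(2))$ structurally transparent rather than a numerical accident. Two small points. First, your tally has a bookkeeping slip: there are $8$ antidiagonals $s = 23,\dots,30$, and the totals are $1,2,1,2,4,2,1,2$ (giving $\widetilde H^8,\dots,\widetilde H^1$ respectively); you listed $9$ numbers, presumably prepending/appending $H^0 = 1$ into the wrong place, though by palindromy of $(1+t)^2(1+t^3)^2$ the final comparison comes out the same. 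Second, your dimension count implicitly relies on the vanishing of the $p=14$ column established at the end of the proof of \cref{shape-of-page-1} (itself using the Andreotti--Frankel bound of \cref{andreotti--frankel}); you correctly cite that proposition, but it is worth flagging that this column's vanishing is not automatic from the casework alone.
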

\begin{proof}
Recall that $E^r_{p,q} \implies \widebar H_{p+q} (\sigma) \cong \widetilde H^{31 - p - q}(\sigma)$.
The page $E^1_{p,q}$ is quite sparse to begin with, the only potentially non-zero differentials (on any page) are shown in \cref{big-SS}.
By \cref{hypersurface-complement}, since $\widetilde X_\ell = \Pi_l \setminus \cV(\Delta_\ell)$, we must have 
\[P_\QQ(\widetilde X_\ell, t) = P_\QQ(\CC^\times, t)P_\QQ(\widetilde M_\ell, t) = (1+t)P_\QQ(\widetilde M_\ell, t) \dispunct,\]
where $P_\QQ(\_, t)$ denotes the (rational) Poincaré polynomial.
This shows that $H^2(\widetilde X_\ell) \cong \widebar H_{29}(\sigma)$ and $H^6(\widetilde X_\ell) \cong \widebar H_{25}(\sigma)$ cannot be $0$, which means all those differentials must vanish.
So $E^\infty_{p,q} \cong E^1_{p,q}$ and there are no extension problems with rational coefficients.
\end{proof}

%
%
%
%
%
%

\printbibliography

\end{document}